\tikzset{->-/.style={decoration={
			markings,
			mark=at position .5 with {\arrow[scale=0.8]{>}}},postaction={decorate}}}
\tikzset{snake it/.style={decorate, decoration={snake, amplitude=.4mm, segment length=2mm}}}
\newtheorem{question}{Question}
\newtheorem{theorem}{Theorem}
\newtheorem{claim}{Claim}[section]
\newtheorem{proposition}{Proposition}
\newtheorem{lemma}{Lemma}
\newtheorem{case}{Case}
\newtheorem{property}{Property}
\newtheorem{observation}{Observation}
\newcommand{\ml}{l\kern-0.55mm\char39\kern-0.3mm}
\def\qed{\hfill \nopagebreak\rule{5pt}{8pt}}
\title{\textbf{On independent domination and packing numbers of subcubic graphs}}
\author{Xuqing Bai$^a$, \quad Zhipeng Gao$^a$\thanks{Corresponding author.}, \quad Changqing Xi$^b$, \quad Jun Yue$^c$\\
{\small $^a$ School of Mathematics and Statistics, Xidian University, Xi'an, 710071, China}\\
{\small $^b$ Center for Combinatorics and LPMC, Nankai University,
Tianjin, 300071, China}\\
{\small $^c$ School of Mathematics Science, Tiangong University,
Tianjin, 300387, China}\\
{\small $\{$baixuqing@xidian.edu.cn, gaozhipeng@xidian.edu.cn, 
xcqmath@163.com, yuejun06@126.com$\}$}}
\date{}
\begin{document}

\maketitle	
	
	\maketitle	
	\begin{abstract}
		In a recent paper, Cho and Kim proved that in subcubic graphs, the independent domination number is at most three times the packing number. They subsequently posed the question of characterizing subcubic graphs that achieve this bound. In this paper, we completely solve the question by proving that exactly four graphs meet this bound. 
	\end{abstract}
	
	\textbf{Keywords:}
	Independent domination, Packing number, Subcubic graph.
	
	\section{Introduction}
	In this paper, all graphs are finite, simple, and undirected. We define a graph $G$ as $(V(G), E(G))$ with a vertex set $V(G)$ and an edge set $E(G)$. The order of the graph is defined as $n = |V(G)|$. Let $u$ and $v$ be two vertices in $G$, and let $S$ be a subset of vertices in $G$. The length of a shortest path that connects $u$ and $v$ is called the \emph{distance} between $u$ and $v$ in $G$ and denoted $d_G(u,v)$. While the distance between $u$ and $S$ is given by $d_G(u,S)=\min\limits_{v\in S}\{d_G(u,v)\}$.
	For any integer $i \geq 1$, we denote the set of vertices in $G$ that are exactly at distance $i$ from $v$ as $N_i(v)$.
	In particular, $N_1(v)$ represents the open neighborhood, also denoted as $N_{G}(v)$, of $v$ in $G$. 
	Let $N_i[v]$ denotes the set of vertices within distance $i$ from $v$ in $G$ and $N_1[v]$ is the closed neighborhood of $v$ in $G$, also denoted as  $N_G[v]$. 
	Generalizing these concepts, the open neighborhood of $S$, denoted as $N_G(S)$, is $\bigcup_{v \in S} N_G(v)$, while the closed neighborhood of $S$, denoted by $N_G[S]$, is $N_G(S) \cup S$. 
	For $i \geq 1$, the $i^{\text {th}}$ \emph{boundary} of $S$ in $G$, denoted as $\partial_i(S)$, is the set of all vertices at distance $i$ from the set $S$ in $G$. 
	The $1^{\text {st }}$ boundary of $S$ in $G$ is also denoted by $\partial_{G}(S)$, so $\partial_{G}(S)=\partial_1(S)=N_G[S] \backslash S$. 
	Let $N_i[S]$ be the set of vertices within distance $i$ in $G$ from $S$. 
	Specially, $N_1[S]=N_G[S]$ and $N_2[S]=S \cup \partial_1(S) \cup \partial_2(S)$. 
	The \emph{degree} of $u$ in $G$ is $d_G(u)=|N_G(u)|$, while the minimum and maximum degrees of $G$ are denoted by $\delta(G)$ and $\Delta(G)$, respectively.
	The \emph{subgraph induced} by $S$ in $G$ is denoted by $G[S]$. 
	The path, cycle, and complete graph of order $n$ are denoted by $P_n$, $C_n$, and $K_n$, respectively. 
	The \emph{girth} of $G$, denoted by $g(G)$, is the minimum order of a cycle in $G$. 
	A \textit{subcubic graph} is a graph of maximum degree at most 3.
	The \emph{square} of a graph $G$, denoted as $G^2$, is the graph with $V(G^2)=V(G)$ and $uv\in E(G^2)$ if $1\le d_G(u,v)\le 2$. All subscripts, $G$, can be omitted when there is no ambiguity. Let $[n]=\{1,2,\ldots,n\}$. The notations not involved here are referred to \cite{haynes2023domination}. 
	
	An \emph{independent set} of $G$ is a set of vertices where no two are
	adjacent. The cardinality of a maximum independent set in $G$
	is called the independent number of $G$ and is denoted by $\alpha(G)$.
	Let $D$ be a vertex set of $G$. We say that $D$ \emph{dominates} a vertex $u$ of $G$ if $N_G[u]$ contains an element of $D$. The set $D$ is a \emph{dominating set} of $G$ if $D$ dominates every vertex of $G$. The minimum cardinality of a dominating set of $G$ is the \emph{domination number} of $G$, denoted as $\gamma(G)$.
	
	An \emph{independent dominating set} of $G$ is a vertex set that is both an independent set and a dominating set of $G$.
	Analogously, the \emph{independent domination number} of $G$, denoted by $i(G)$, is defined as the size of a minimum independent dominating set of $G$. 
	Independent dominating sets can be understood as maximal independent sets, and the independent domination number is the size of minimum maximal independent set.
	The study of the independent domination numbers of graphs has been extensive since the 1960s, as shown in \cite{berge1962theory, cho2023independent3, ore1962theory}. For an overview of the independent domination numbers of graphs, see \cite{goddard2013independent}. 
	It's clear that $i(G)\geq \gamma(G)$ for every graph $G$, and there have been many results comparing $i(G)$ to $\gamma(G)$. See, for example, \cite{cho2023tight, cho2023independent2}. 
	
	A vertex set $S$ of a graph $G$ is a \emph{packing} of $G$ if the closed neighborhoods of every two distinct vertices in $S$ are disjoint or, equivalently, $d_G(u, v)\geq 3$ for every two distinct vertices $u$ and $v$ in $S$. A packing of $G$ is \emph{maximal} if it is not the subset of any larger packing of $G$. The cardinality of a maximum packing of $G$ is the packing number of $G$, denoted as $\rho(G)$. The study of packing related to a dominating set in graphs has a long history, which dates back to the 1970s \cite{Frank1970Covering, meir1975relations}. Haynes et al. \cite{haynes2023domination} proved $\gamma(G)\geq \rho(G)$ for every graph $G$. 
	See also \cite{haynes1998fundamentalsofdomination, henning2011dominating, TOPP1991229} for an overview of research in this direction. In 1996, Favaron \cite{favaron1996signed} showed the following result on packing number when dealing with a signed domination problem. 
	\begin{lemma}\label{Favathm}{\upshape\cite{favaron1996signed}}		
		If $G$ is a connected cubic graph of order $n$, and $G$ is not the Petersen graph, then $\rho(G)\ge \frac{n}{8}$.
	\end{lemma}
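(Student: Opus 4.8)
The plan is to work with a \emph{maximum} packing $S$ and to exploit that, being maximum, it is in particular maximal, so that every vertex lies within distance two of $S$; equivalently $V(G)=A\cup B\cup C$ with $A=S$, $B=\partial_1(S)$, and $C=\partial_2(S)$. First I would record the structure forced by the packing condition. Since any two vertices of $S$ are at distance at least three, each vertex of $B$ has exactly one neighbour in $A$, and the neighbourhoods $N_G(s)$ for $s\in S$ are pairwise disjoint, so $|B|=3\rho(G)$ with $\rho(G)=|S|$. Consequently $n=|A|+|B|+|C|=4\rho(G)+|C|$, and the desired inequality $\rho(G)\ge n/8$ is equivalent to the single clean claim $|C|\le 4\rho(G)$.

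Next I would set up the edge count between the layers. No edge joins $A$ to $C$ (such a vertex would lie in $B$), and no vertex of $A$ has a neighbour outside $B$; writing $e(B)$ for the number of edges inside $B$, a degree count on $B$ gives $e(B,C)=6\rho(G)-2e(B)$. Since every vertex of $C$ is at distance exactly two from $S$, it has at least one neighbour in $B$, so $|C|\le e(B,C)\le 6\rho(G)$, which already recovers the weaker bound $\rho(G)\ge n/10$, with the Petersen graph as the natural tight example (its square being $K_{10}$, whence $\rho=1=n/10$). To upgrade the coefficient from $6\rho(G)$ to $4\rho(G)$ I would account for all sources of \emph{savings}, of total size $2\rho(G)$: internal edges $e(B)$ of $B$, vertices of $C$ possessing two or more neighbours in $B$, and short cycles through a packing vertex that shrink a ball $N_2[s]$ below its generic size $10$.

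I would organise these savings by a discharging argument in which each vertex carries charge $1$, every vertex of $B$ ships its charge to its unique $A$-neighbour, and every vertex of $C$ splits its charge equally among the packing vertices at distance two from it; the target is that each $s\in S$ retains charge at most $8$. A vertex $s$ can fail this only when its ball $N_2[s]$ has the generic size $10$, all six second-neighbours lie in $C$, and each of them is private to $s$ and joined to $N_G(s)$ by a single edge---a configuration that forces girth five around $s$ and, crucially, forbids any edge from those second-neighbours to the first layer of another ball. The hard part, I expect, is the final rigidity step: showing that a connected cubic graph all of whose balls are simultaneously of this \emph{no-savings} type cannot exist unless it collapses into a single ball, i.e.\ unless $G$ is the Petersen graph. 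The cleanest route I foresee is to argue that whenever two distinct balls are present, the $C$--$C$ edges forced by connectivity and $3$-regularity either create a shared second-neighbour (supplying the missing saving) or enable a local swap that enlarges $S$, contradicting maximality; pushing this to its conclusion isolates the Petersen graph as the sole exception and yields $|C|\le 4\rho(G)$ in every other case.
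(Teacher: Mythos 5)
This lemma is quoted by the paper from Favaron's 1996 article; the paper itself gives no proof, so your attempt can only be judged on its own terms. Your reduction is correct and cleanly done: with $S$ a maximal packing and $A=S$, $B=\partial_1(S)$, $C=\partial_2(S)$, cubicity and the packing condition give $|B|=3\rho(G)$ and $n=4\rho(G)+|C|$, so the theorem is equivalent to $|C|\le 4\rho(G)$; the edge count $e(B,C)=6\rho(G)-2e(B)$ and the resulting $n/10$ bound, tight for Petersen, are also right. But the entire content of Favaron's theorem is the upgrade from $6\rho(G)$ to $4\rho(G)$, i.e.\ the claim that the total savings amount to at least $2\rho(G)$, and that step is not actually proved: you announce it as ``the hard part, I expect'' and offer only a forecast (``the cleanest route I foresee\ldots''). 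That is a genuine gap, not a routine verification.

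There is also a structural flaw in how the discharging is set up. You state the target as the \emph{local} bound that each $s\in S$ retains charge at most $8$, and then your rigidity step only addresses the extreme situation in which \emph{every} ball is of the no-savings type. Neither matches what is needed. The local bound is too strong: a single packing vertex of girth-five type with six private second-neighbours legitimately carries charge $10$ in a large non-Petersen graph, so you cannot hope to rule out individual overcharged balls; you must show the \emph{average} charge is at most $8$, which requires a global accounting of where the compensating savings come from. Conversely, contradicting only the all-balls-fail configuration proves nothing about graphs in which some balls fail and others do not. Moreover, a $C$-vertex shared between two balls contributes $1/2$ to each, so a ball can still exceed charge $8$ while possessing some savings; your characterisation of failing balls as having \emph{only} private second-neighbours is therefore not correct either. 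To complete the proof you would need a precise redistribution rule (or a direct counting of $e(B)$, shared $C$-vertices, and undersized balls summed over all of $S$) together with the swap/augmentation argument carried out in full, including the isolation of the Petersen graph; as written, the proposal stops exactly where the theorem begins.
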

	In 2017, Henning \cite{2017Packing} further generalized this result to $k$-regular graphs with $k \geq 3$.
	\begin{lemma}{\upshape\cite{2017Packing}}	\label{regular}
		If $G$ is a connected $k$-regular graph of order $n$ that is not a Moore graph of diameter $2$, then $\rho(G) \ge \frac{n}{k^2-1}$.
	\end{lemma}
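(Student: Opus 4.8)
The plan is to pass to the square $G^2$ and reduce the statement to a lower bound on an independence number. Since $S\subseteq V(G)$ is a packing precisely when $d_G(u,v)\ge 3$ for all distinct $u,v\in S$, while $uv\in E(G^2)$ exactly when $1\le d_G(u,v)\le 2$, a packing of $G$ is the same thing as an independent set of $G^2$; hence $\rho(G)=\alpha(G^2)$. Moreover every vertex satisfies $d_{G^2}(v)=d_G(v)+|\partial_2(v)|\le k+k(k-1)=k^2$, so $\Delta(G^2)\le k^2$, with equality at $v$ if and only if $|\partial_2(v)|=k(k-1)$, i.e. no triangle or quadrilateral passes through $v$. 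The greedy (Caro--Wei) bound then already yields $\alpha(G^2)\ge n/(\Delta(G^2)+1)\ge n/(k^2+1)$. The whole difficulty is to sharpen the denominator from $k^2+1$ to $k^2-1$, and this is exactly where the exclusion of the Moore graph must enter.

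I would first dispose of the easy cases by coloring. If $\mathrm{diam}(G)\ge 3$ and $\Delta(G^2)\le k^2-1$ (equivalently, every vertex of $G$ lies on a triangle or a $4$-cycle), then $G^2$ is connected, it is not complete because $\mathrm{diam}(G)\ge 3$, and for $k\ge 3$ it is not an odd cycle since $d_{G^2}(v)\ge k\ge 3$; Brooks' theorem therefore gives $\chi(G^2)\le\Delta(G^2)\le k^2-1$, and splitting $V(G)$ into color classes produces an independent set of $G^2$ of size at least $n/\chi(G^2)\ge n/(k^2-1)$. The case $\mathrm{diam}(G)\le 2$ is separate: then a single vertex is a maximal packing, so $\rho(G)=1$ and the claim reads $n\le k^2-1$. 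Here a connected $k$-regular graph of diameter $2$ has $n\le k^2+1$, the value $n=k^2+1$ forces a Moore graph (excluded), and $n=k^2$ (defect one) is impossible for $k\ge 3$ by the theorem of Erdős, Fajtlowicz and Hoffman; hence $n\le k^2-1$.

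The substantive case is $\mathrm{diam}(G)\ge 3$ together with $\Delta(G^2)=k^2$, where some ball $N_2[v]$ is full and Brooks only returns the weaker $n/k^2$. Here I would argue directly with a maximum (hence maximal) packing $S$, so that $N_2[S]=V(G)$. Setting $\mathrm{def}=\sum_{v\in S}\bigl(k^2+1-|N_2[v]|\bigr)$ for the total ball deficiency and $E=\sum_{w\in V(G)}\bigl(|\{v\in S:\,d_G(v,w)\le 2\}|-1\bigr)$ for the total covering excess, a direct count gives $n=(k^2+1)\rho-\mathrm{def}-E$, so the theorem becomes \emph{equivalent} to $\mathrm{def}+E\ge 2\rho$. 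The excess is forced by connectivity: if two centers satisfy $d_G(v,v')=3$, then a geodesic $v,a,b,v'$ has $a,b\in N_2[v]\cap N_2[v']$, contributing at least two doubly covered vertices, while $d_G(v,v')=4$ yields at least one (the midpoint of the geodesic). Since $G$ is connected, at least $\rho-1$ pairs of balls must ``touch'' across edges of a spanning tree, and the plan is to harvest roughly two units of $\mathrm{def}+E$ from each such touching.

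The main obstacle lies precisely in this last step. An edge crossing from $N_2[v]$ to $N_2[v']$ only guarantees $d_G(v,v')\le 5$, and the pairs at distance exactly $5$ have disjoint balls, so they contribute nothing to $E$; accounting for these pairs — showing that the boundary region between two such balls forces extra deficiency, or rebalancing the charge so that no overlap or deficiency is assigned to two different centers — is the delicate heart of the argument. I expect this to require the Erdős--Fajtlowicz--Hoffman defect bound as the $\rho=1$ base case and an essentially local analysis of how the second neighborhoods of nearby packing vertices must interlock once the Moore configuration is forbidden, so that the inequality $\mathrm{def}+E\ge 2\rho$ is recovered uniformly and the bound $\rho(G)\ge n/(k^2-1)$ follows.
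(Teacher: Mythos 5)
This is a cited result (the paper does not prove Lemma~\ref{regular}; it quotes it from Henning--Klostermeyer), so your attempt can only be judged on its own terms, and as it stands it has a genuine gap: the main case is not proved. Your reduction $\rho(G)=\alpha(G^2)$, the diameter-$2$ case (Moore bound, exclusion of the Moore graph, and Erd\H{o}s--Fajtlowicz--Hoffman to kill $n=k^2$), and the Brooks argument when $\Delta(G^2)\le k^2-1$ are all fine. But in the remaining case ($\mathrm{diam}(G)\ge 3$ and some vertex with $|N_2[v]|=k^2+1$) you only \emph{restate} the theorem: the identity $n=(k^2+1)\rho-\mathrm{def}-E$ makes the claim exactly equivalent to $\mathrm{def}+E\ge 2\rho$, and that inequality is never established. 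The charging scheme you sketch cannot close it as described: a spanning tree on the ball-intersection structure yields at most $\rho-1$ ``touching'' pairs, each contributing $2$ only when the two centers are at distance $3$; distance-$4$ pairs contribute $1$, distance-$5$ pairs contribute $0$ to $E$ and force no deficiency by themselves, and even in the best case you reach only $2(\rho-1)<2\rho$. So the hardest part of the lemma --- precisely where the non-Moore hypothesis must do global work rather than serve as a base case --- is left open, and you say so yourself (``I expect this to require\ldots'').

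For comparison, the actual proof in the cited source takes a different and much shorter route: it applies the clique-based independence bound (stated in this paper as Lemma~\ref{keylemma}) to $G^2$ with $p=2(k^2-1)$, showing that every clique $X$ of $G^2$ contains a vertex $x$ with $d_{G^2}(x)<p-|X|$ unless $G$ is a Moore graph of diameter $2$; this gives $\alpha(G^2)\ge 2n/p=n/(k^2-1)$ in one stroke, with no case split on diameter and no global counting over a maximal packing. That is also the template this paper follows for its own Theorem~\ref{packingtheorem}. If you want to salvage your approach, the missing step is a local interlocking lemma showing that any packing vertex whose ball neither overlaps another ball by $2$ nor is deficient by $2$ forces the Moore configuration; without that, the proof is incomplete.
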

	The bound given in Lemma \ref{regular} is sharp when $k=4$ and $k=5$. For 3-regular graphs, we obtain a result as stated in the following theorem. This result may be of independent interest, and we utilize it during the proof of Theorem \ref{th1}.
	\begin{theorem}\label{packingtheorem}
		If $G$ is a cubic graph of girth $5$ and order $n$, where $n\geq12$, then $\rho(G)\ge \frac{n+1}{8}$. 
	\end{theorem}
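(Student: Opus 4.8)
The plan is to prove the equivalent inequality $8\rho(G)\ge n+1$. We may assume $G$ is connected. A cubic graph of girth $5$ on $n\ge 12$ vertices is not the Petersen graph, which is the unique cubic Moore graph of diameter $2$, so Lemma~\ref{regular} already yields $\rho(G)\ge n/8$, that is $8\rho(G)\ge n$. Since $8\rho(G)$ is a multiple of $8$, this immediately gives $8\rho(G)\ge n+1$ whenever $8\nmid n$. Thus everything reduces to the case $8\mid n$, where it suffices to rule out the equality $8\rho(G)=n$; equivalently, to show that a connected cubic graph of girth $5$ with $n=8m\ge 16$ cannot have $\rho(G)=m$.

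So suppose $\rho(G)=m=n/8$ and fix a maximum packing $S$. Being maximum, $S$ is maximal, so no vertex is at distance $\ge 3$ from $S$; that is, $N_2[S]=V(G)$ and $V(G)$ is the disjoint union $S\cup\partial_1(S)\cup\partial_2(S)$. Write $B_1=\partial_1(S)$ and $B_2=\partial_2(S)$. Since distinct vertices of $S$ are at distance $\ge 3$ and $G$ is cubic, the closed neighborhoods $N[v]$ with $v\in S$ are pairwise disjoint, so $|B_1|=3\rho$ and $|B_2|=n-4\rho=4\rho$. The goal is to contradict the existence of this configuration.

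I would then record the local consequences of girth $5$ and turn them into edge counts, writing $e(X)$ and $e(X,Y)$ for the numbers of edges inside $X$ and between $X$ and $Y$: (i) by girth $5$, no two neighbors of a common $v\in S$ are adjacent and $v$ has exactly six distinct second neighbors; (ii) each $y\in B_1$ has a unique neighbor in $S$, since two would put two members of $S$ at distance $2$; and (iii) for $x\in B_2$, two neighbors of $x$ in $B_1$ cannot share the same $S$-neighbor, as that would create a $4$-cycle. Counting the free edges at $B_1$ gives $e(B_1,B_2)=6\rho-2e(B_1)$, while counting at $B_2$ gives $e(B_1,B_2)=3|B_2|-2e(B_2)=12\rho-2e(B_2)$; combined with the fact that every vertex of $B_2$ has at least one neighbor in $B_1$ (so $e(B_1,B_2)\ge|B_2|=4\rho$), these force $0\le e(B_1)\le\rho$ and $e(B_2)=3\rho+e(B_1)$. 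In particular $G[B_2]$ has $4\rho$ vertices, at most $4\rho$ edges, and girth $\ge 5$; in the extreme subcase $e(B_1)=\rho$ every vertex of $B_2$ has exactly one neighbor in $B_1$ and $G[B_2]$ is a disjoint union of cycles of length $\ge 5$.

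The main obstacle is the final, genuinely structural step: these numerical identities by themselves admit a range of tight solutions, so a global count cannot close the argument, and one must instead show that the rigid configuration forced in the equality case is incompatible with a connected girth-$5$ graph on $n\ge 16$ vertices. I would do this by analysing, for each $v\in S$, how its six second neighbors split between $B_1$ and $B_2$ together with the numbers $p_x\in\{1,2,3\}$ of $B_1$-neighbors of the vertices $x\in B_2$, and arguing that simultaneously saturating all the inequalities above forces either a triangle or a $4$-cycle (contradicting girth $5$) or else exhibits a vertex of $B_2$ that can be exchanged for a vertex of $S$ to build a packing of size $\rho+1$, contradicting the maximality of $S$. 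Controlling how the second neighborhoods may be glued together globally, and eliminating every way in which the graph could close up without creating a short cycle, is where the girth-$5$ hypothesis and the hypothesis $n\ge 12$ are used decisively, and is the step I expect to require the most care.
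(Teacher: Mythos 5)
Your reduction to the case $8\mid n$ via Lemma~\ref{regular} is sound (granting connectivity, which the theorem implicitly assumes), and the bookkeeping for a hypothetical maximum packing $S$ with $|S|=n/8$ is correct: $|B_1|=3\rho$, $|B_2|=4\rho$, $e(B_1,B_2)=6\rho-2e(B_1)=12\rho-2e(B_2)$, hence $e(B_1)\le\rho$ and $e(B_2)=3\rho+e(B_1)$. But the proof stops exactly where it would have to begin. As you yourself concede, these identities admit a whole range of tight solutions, and the claimed contradiction --- that any way of gluing the second neighborhoods together forces a short cycle or an augmentable packing --- is only announced, not derived. Nothing in the proposal rules out, say, the configuration $e(B_1)=0$, $e(B_2)=3\rho$ with the $B_1$-degrees of $B_2$-vertices distributed as a mix of $1$'s and $2$'s; eliminating all such configurations for every $\rho\ge 2$ is a genuinely global argument, not a local case check, and there is no evidence in the proposal that it can be closed along these lines. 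So the submission is a correct set-up plus an unproven main claim, i.e., it has a genuine gap.

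For comparison, the paper avoids this structural analysis entirely. It runs a greedy packing algorithm to show that if $\rho(G)<\frac{n+1}{8}$ then any two vertices at distance $3$ satisfy $|N_2[u_1]\cap N_2[u_2]|\le 4$ (Claim~\ref{Adjacentclaim1}), deduces that two $5$-cycles share at most one edge (Claim~\ref{Adjacentclaim2}), and uses these to bound the clique number of $G^2$ by $5$. Since $d_{G^2}(x)\le 9$ for a cubic girth-$5$ graph, Lemma~\ref{keylemma} with $p=15$ gives $\rho(G)=\alpha(G^2)\ge\lceil 2n/15\rceil\ge\frac{n+1}{8}$ for $n\ge 12$, a contradiction. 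The work is thus concentrated in a finite case analysis of $6$-cliques in $G^2$ inside a single second neighborhood, which is tractable, whereas your route defers all the difficulty to a global extremal configuration that you have not eliminated. If you want to salvage your approach you must actually carry out that final step; as written, the statement is not proved.
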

	Henning et al. \cite{henning2011dominating} observed that  $\gamma(G) \le \Delta(G)\rho (G)$, and characterized the connected subcubic graphs with $\gamma(G)=3\rho (G)$. Recently, Cho and Kim obtained the following stronger conclusion for subcubic graphs.
	
	\begin{theorem}{\upshape\cite{cho2023independent}}\label{th0}
		If $G$ is a subcubic graph, then $i(G)\leq 3\rho (G)$. 
	\end{theorem}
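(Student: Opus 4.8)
The plan is to prove the equivalent statement $\rho(G)\ge \tfrac{1}{3}i(G)$ by extracting a large packing from a \emph{minimum} independent dominating set. Fix a minimum independent dominating set $I$, so $|I|=i(G)$, and assume $G$ is connected with no isolated vertex (isolated vertices lie in both $I$ and in every maximal packing, so they can be stripped off without changing the ratio). I would build a packing $P\subseteq I$ greedily: while some vertex of $I$ is still ``uncovered'', pick an uncovered $v\in I$, put it into $P$, and declare covered every vertex of $N_2[v]\cap I$. Because a vertex is picked only while uncovered, each newly picked $v$ is at distance at least $3$ from every previously picked vertex; hence $P$ is automatically a packing and $|P|\le \rho(G)$.

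Since $I$ is independent, $N_2[v]\cap I$ consists of $v$ together with the vertices of $I$ lying at distance \emph{exactly} $2$ from $v$. If I can guarantee that at each stage some available pick $v$ satisfies $|N_2[v]\cap I|\le 3$, then every pick covers at most three vertices of $I$, the greedy process uses at least $|I|/3$ picks, and therefore $i(G)=|I|\le 3|P|\le 3\rho(G)$, as desired. Thus the whole argument reduces to the local counting statement: at every stage there is an uncovered $v\in I$ with at most two \emph{other} vertices of $I$ within distance $2$. The constant $3$ is natural here: if $P$ is any maximal packing and $A=\partial_1(P)$ is its first shell, then the neighbourhoods $\{N(u):u\in P\}$ are pairwise disjoint and cover $A$, so $|A|\le 3\rho(G)$, while $A$ dominates $V(G)$ (every vertex at distance $2$ from $P$ has a neighbour in $A$). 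This already gives $\gamma(G)\le 3\rho(G)$ and explains where the factor $3$ comes from; the entire difficulty is upgrading domination to \emph{independent} domination without loosening this count.

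The main obstacle is exactly this local counting lemma, where the minimality of $I$ and the subcubic hypothesis must both be used. Suppose for contradiction that some $v\in I$ has three or more other vertices $a_1,a_2,a_3,\dots$ of $I$ at distance $2$. Each $a_j$ reaches $v$ through a common neighbour, and since $d_G(v)\le 3$ these intermediate vertices lie among the at most three neighbours $w_1,w_2,w_3$ of $v$. I would argue that such a dense cluster admits a local exchange — replacing $v$, and when necessary one of the $a_j$, by a single well-chosen intermediate vertex $w_i$ — that keeps the set independent and dominating while strictly decreasing its size, contradicting minimality. Carrying this out requires a careful case analysis according to how the paths $v$–$w_i$–$a_j$ overlap, which neighbours of the $w_i$ and the $a_j$ lie in $I$, and which vertices serve as private neighbours; the subcubic bound keeps each ball small enough for the analysis to terminate. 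The rigid configurations in which no size-decreasing swap exists and the count is forced to equal $3$ are precisely the extremal graphs (for instance $K_{3,3}$, one side of which is a minimum independent dominating set of size $3=3\rho$), so these must be isolated as the boundary cases, and the finitely many small graphs where the greedy process cannot even begin have to be checked directly.
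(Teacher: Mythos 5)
Your proposal runs in the opposite direction from the paper: the paper starts from an \emph{arbitrary maximal packing} $S$ and constructs an independent dominating set $\hat{A}=A\cup S^A\cup Z$ of size at most $3|S|$, via the choice of $A$ subject to conditions \ref{i}--\ref{iii} and the injection $f$ of (\ref{eq1}); you instead start from a \emph{minimum independent dominating set} $I$ and try to extract a packing of size at least $|I|/3$ from inside $I$ by a greedy covering process. That reversal is legitimate in principle, and your warm-up observation that $\gamma(G)\le 3\rho(G)$ is correct, but your argument has a genuine gap: the ``local counting lemma'' --- that at every stage of the greedy some uncovered $v\in I$ has at most two other (uncovered) vertices of $I$ in $N_2[v]$, i.e.\ that the distance-$2$ graph induced on $I$ is $2$-degenerate --- is the entire content of the theorem, and you do not prove it; you only announce that a local exchange plus ``careful case analysis'' should work. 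Note also that this lemma is \emph{strictly stronger} than the theorem itself: it asserts that a packing of size at least $i(G)/3$ exists \emph{inside} a minimum independent dominating set, a structural fact which the paper never establishes even as a by-product (in a cubic graph a vertex of $I$ can a priori have up to six members of $I$ at distance $2$, so without the lemma your greedy only yields $i(G)\le 7\rho(G)$ or so).

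There are concrete reasons to doubt that the sketched exchange closes the gap. First, the single swap you propose, replacing $\{v,a_j\}$ by an intermediate vertex $w_i$, generically destroys both domination (the private neighbors of $v$ and of $a_j$ lose their dominator) and independence (the third neighbor of $w_i$ may itself lie in $I$), and repairing these creates new deficiencies elsewhere; minimality of $I$ is a global hypothesis, and the analogous rigidity arguments in this paper (the proofs of Proposition \ref{cubic} and Lemma \ref{pro2}) require \emph{unbounded iterated chains} of exchanges, generated by Algorithms 1 and 2, precisely because a bounded-radius swap does not suffice to break such configurations. Second, even if you could show that no vertex of a minimum $I$ has three or more distance-$2$ partners, at later greedy stages you need the degeneracy statement for every residual uncovered subset, which your exchange sketch (aimed only at bounding the maximum degree of the distance-$2$ graph) does not address. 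As it stands the proposal is a plausible research program with its central lemma unproven and of uncertain truth, whereas the paper's packing-to-dominating-set direction avoids the lemma entirely --- and, not incidentally, its ``for every maximal packing'' formulation is exactly what powers the equality analysis (Lemma \ref{pro1cl_1} and Propositions \ref{cubic}--\ref{prop3}) used to resolve Question \ref{q}.
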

	Furthermore, Cho and Kim posed the following question.
	
	\begin{question}{\upshape\cite{cho2023independent}}\label{q}
		If $G$ is a connected graph with $\Delta(G)\le 3$, then for which graphs $i(G)=3\rho (G)$ hold?
	\end{question}
	
	In this paper, we solve Question \ref{q}, as detailed in the next theorem.
	\begin{theorem}\label{th1}
		If $G$ is a connected subcubic graph, then $i(G)=3\rho (G)$ if and only if $G\in\{H_1,H_2,H_3,H_4\}$ {\rm{(see Figure \ref{fig_1})}}.
	\end{theorem}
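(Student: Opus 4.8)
The sufficiency (``if'') direction is a finite verification. For each of the four graphs $H_1,\dots,H_4$ I would first check that it has diameter at most $2$, so that no two vertices lie at distance $3$ and hence $\rho(H_i)=1$, and then check by hand that its minimum maximal independent set has size exactly $3$, giving $i(H_i)=3=3\rho(H_i)$. For example, $K_{3,3}$ and the Petersen graph both have diameter $2$; in $K_{3,3}$ neither a single vertex nor an independent pair dominates, while either colour class is independent and dominating, and the Petersen graph is settled by the analogous short case analysis.

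The content lies in the necessity direction. Assume $G$ is a connected subcubic graph with $i(G)=3\rho(G)$; the aim is to bound the order of $G$ and then enumerate, and I would split according to whether $G$ is cubic. When $G$ is cubic, the packing estimates of the Introduction combine with the sharp bound $i(G)\le\tfrac38 n$ for connected cubic graphs (apart from $K_{3,3}$). If $g(G)\le 4$, then Lemma \ref{Favathm} gives $\rho(G)\ge\tfrac n8$, so $3\rho(G)\ge\tfrac38 n\ge i(G)$, and equality forces simultaneous tightness of both estimates; by the characterization of equality in the bound $i\le\tfrac38 n$ this leaves only $K_{3,3}$, together with finitely many configurations that are checked directly. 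If $g(G)\ge 5$, then Theorem \ref{packingtheorem} is decisive: for $n\ge 12$ it yields $\rho(G)\ge\tfrac{n+1}8$, whence $3\rho(G)\ge\tfrac{3(n+1)}8>\tfrac38 n\ge i(G)$, a strict inequality ruling out every large girth-five cubic graph; since the Moore bound makes the Petersen graph the unique cubic graph of girth $5$ on at most $10$ vertices, the cubic case yields exactly $K_{3,3}$ and the Petersen graph. This is precisely where the seemingly minor improvement ``$+1$'' of Theorem \ref{packingtheorem} is indispensable: the packing bound and the domination bound meet with the same slope $\tfrac38$, and only that strict gain separates them.

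It remains to treat the non-cubic case, where $G$ has a vertex of degree at most $2$; this is where I expect the principal difficulty, since the regularity hypotheses of Lemmas \ref{Favathm} and \ref{regular} and of Theorem \ref{packingtheorem} no longer apply. I would argue structurally around a vertex of small degree, showing that it either permits an independent dominating set of size strictly below $3\rho(G)$, via a refinement of the mechanism underlying Theorem \ref{th0}, or else forces $G$ into a small-diameter regime; in either case the order of $G$ is bounded. Once the order is bounded, the proof finishes with a finite enumeration over connected subcubic graphs of small order: every survivor turns out to satisfy $\operatorname{diam}(G)\le 2$, $\rho(G)=1$, and $i(G)=3$, and here the observation that any two non-adjacent vertices of such a graph share a common neighbour, so an independent pair dominates at most $7$ vertices, sharply restricts the candidates. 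One then checks that exactly $H_1,\dots,H_4$ occur. The main obstacles, in order, are establishing and using the equality analysis of the sharp cubic bound $i\le\tfrac38 n$ in tandem with the packing estimates, and supplying the non-regular argument that bounds the order of subcubic graphs to which the regularity lemmas do not apply.
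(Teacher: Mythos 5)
Your sufficiency check and your cubic girth-$5$ argument do match the paper: there, too, Lemma \ref{Henningconj} gives $i(G)\le \frac{3n}{8}$, Lemma \ref{Favathm} gives $\rho(G)\ge \frac n8$, and Theorem \ref{packingtheorem} supplies the strict gain that forces $n\le 10$ and hence the Petersen graph. But the rest of the plan has two genuine gaps. First, the non-cubic case is not something to be disposed of by "arguing structurally around a vertex of small degree'': in the paper this is Proposition \ref{cubic}, a substantial argument. Starting from the counting identity forced by $|\hat A|=3|S|$ (which yields $S_1=\emptyset$ and $S_2=S_2^A$ for \emph{every} maximal packing), one places a degree-$2$ vertex $y$ into a packing and runs an alternating-path algorithm through $W$ that eventually permits a re-choice of $A$ violating $S_2=S_2^A$. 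You offer no mechanism for producing "an independent dominating set of size strictly below $3\rho(G)$'', and the actual conclusion is not that the order is bounded but that no vertex of degree at most $2$ can exist at all.

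Second, and more seriously, your girth-$\le 4$ cubic case rests on "the characterization of equality in the bound $i\le\frac38 n$''. No such characterization is stated, cited, or available here: Lemma \ref{Henningconj} excludes every graph containing $K_{2,3}$, so it does not even apply to $H_2=K_{3,3}$ or to any $K_{2,3}$-containing candidate, and neither the extremal graphs for the $\frac38$ bound nor those for Favaron's $\rho\ge\frac n8$ are classified in this paper. Consequently you have no bound on $n$ when $g(G)\in\{3,4\}$, and the promised finite enumeration never gets off the ground. The paper takes a different and self-contained route: it proves Propositions \ref{H} and \ref{prop3} (for any maximal packing $S$, the graph $G[N(S)]$ is a matching plus isolated vertices, and $N(S)$ splits rigidly into $A$ and $Z^*$), deduces $g(G)\le 5$, and then determines $H_1$, $H_2$, $H_3$ by local analysis around a triangle or $4$-cycle, repeatedly choosing packings whose neighbourhoods would otherwise contain a $P_3$ or $P_4$. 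A smaller point: Theorem \ref{packingtheorem} is stated for girth exactly $5$, so your "$g(G)\ge 5$'' branch leaves girth at least $6$ uncovered; the paper excludes it separately, with the Heawood graph emerging as the one configuration to be checked by hand.
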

	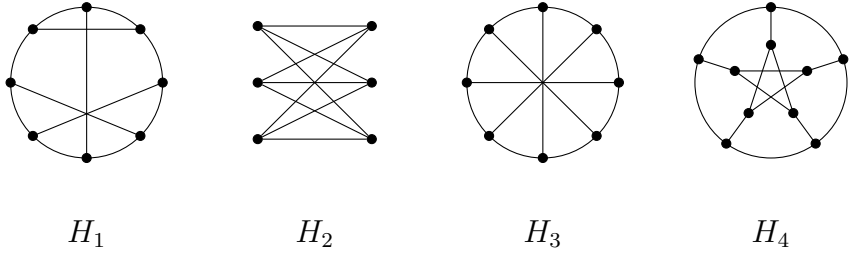
\begin{figure}[h]
		\centering
		\begin{tikzpicture}[scale=.5]
		\begin{scope}[xshift=-1cm]
		\coordinate (A) at (0,0);
		\foreach \x in {1,...,8}
		{
			\coordinate (P\x) at (\x*45:2cm);
			\node[draw,circle,inner sep=1.2pt,fill] at (P\x){};
		}
		\foreach \i/\j in {1/3,2/6,4/7,5/8}{
			\draw (P\i)--(P\j);
		}
		\draw (0,0) circle (2cm);
		\node at (0,-4) {$H_1$};
		
		\end{scope}
		\begin{scope}[xshift=4.5cm]
		\node[draw,circle,inner sep=1.2pt,fill](P1) at (-1,-1.5){};
		\node[draw,circle,inner sep=1.2pt,fill](P2) at (-1,0){};
		\node[draw,circle,inner sep=1.2pt,fill](P3) at (-1,1.5){};
		
		\node[draw,circle,inner sep=1.2pt,fill](Q1) at (2,-1.5){};
		\node[draw,circle,inner sep=1.2pt,fill](Q2) at (2,0){};
		\node[draw,circle,inner sep=1.2pt,fill](Q3) at (2,1.5){};
		\foreach \i in {1,2,3}{
			\draw (P1)--(Q\i);
			\draw (P2)--(Q\i);
			\draw (P3)--(Q\i);
		}

		\node at (0.5,-4) {$H_2$};
		
		\end{scope}
		\begin{scope}[xshift=11cm]
		\coordinate (A) at (0,0);
		\foreach \x in {1,...,8}
		{
			\coordinate (P\x) at (\x*45:2cm);
			\node[draw,circle,inner sep=1.2pt,fill] at (P\x){};}
		\foreach \i/\j in {1/5,2/6,3/7,4/8}{
			\draw (P\i)--(P\j);
		}
		\draw (0,0) circle (2cm);
		\node at (0,-4) {$H_3$};
		\end{scope}
		\begin{scope}[xshift=17cm]
		\coordinate (A) at (0,0);
		\foreach \x in {1,...,5}
		{
			\coordinate (P\x) at (\x*72+18:2cm);
			\node[draw,circle,inner sep=1.2pt,fill] at (P\x){};
			\coordinate (Q\x) at (\x*72+18:1cm);
			\node[draw,circle,inner sep=1.2pt,fill] at (Q\x){};				
		}
		\foreach \i/\j in {1/3,2/4,3/5,4/1,5/2}{
			\draw (P\i)--(Q\i);
			\draw (Q\i)--(Q\j);
		}
		\draw (0,0) circle (2cm);
		\node at (0,-4) {$H_4$};
		\end{scope}
		
		\end{tikzpicture}
		\caption{Tight graphs for Theorem \ref{th1}.}
		\label{fig_1}
	\end{figure}
	
	
	Our paper is organized as follows. In section 2, we present three propositions that are integral to the proof of Theorem \ref{th1}. Section 3 is dedicated to the proof of Theorem \ref{packingtheorem}. 
	In the last section, we provide the detailed proof of Theorem \ref{th1}.

	\section{Three Propositions}
	In this section, we will establish three crucial propositions that serve as fundamental tools in our primary proof of Theorem \ref{th1}. Before proving these propositions, it's essential to provide a brief restatement of the proof of Theorem \ref{th0} here.
	
	Let $G=(V(G),E(G))$ be a connected subcubic graph and let $S$ be a maximal packing of $G$.
	Define $N = N_G(S)$, $R = V(G) \setminus N_G[S]$ and $H = G[N]$. Since each vertex in $N$ has a neighbor in $S$ and the degree of each vertex in $G$ is at most 3, the induced subgraph $H$ has a maximum degree of at most 2. This implies that $H$ is the disjoint union of cycles, paths and isolated vertices. We may regard the set of paths of length 1 in $H$ as an induced matching $M$.	Let $W$ be the set of endpoints of the edges in $M$. Given $B\subseteq N$, let $X(B)$ be the set of all vertices $s\in S$ such that $|N_G(s)|=3$ and $N_G(s)\subseteq (N\setminus B)\cap W$. Now choose a set $A$ with $A \subseteq N$ that satisfies the following conditions.
	\begin{enumerate}[label={(\roman*)}]
		\item For each path $P_i$ of length at least $2$, $A$ contains the two endpoints of $P_i$. \label{i}
		\item \label{ii} $A$ is a maximal independent set in $H$ satisfying  \ref{i}.
		\item \label{iii} $|X(A)|$ is minimum among all choices satisfying \ref{i} and \ref{ii}.
	\end{enumerate}
	
	Based on the selection of $A$, $A$ is an independent dominating set of $H$. Let $T$ denote the set of vertices in $R$ that are not dominated by $A$. Given a vertex set $A$ that satisfies the conditions \ref{i}--\ref{iii}, Cho and Kim \cite{cho2023independent} presented a strategy for choosing an independent domination set $\hat{A}$ in $G$ that comprises of the following three sets:
	\begin{enumerate}
		\item[(I)] The vertex set $A$ satisfying \ref{i}-\ref{iii}.\label{1}
		\item[(II)] The vertex set, say $S^A$, of all vertices $s\in S$ that are not dominated by $A$.
		\item[(III)] An independent dominating set, say $Z$, of $G[T]$. 
	\end{enumerate}


	For convenience, let $S=S_1\cup S_2\cup S_3$, where $S_i$ denotes the set of all vertices in $S$ of degree $i$ for $1\le i\le 3$. Similarly, $S^A$ in (b) can be partitioned into $S^A_1\cup S^A_2\cup S^A_3$. Note by definition, $S^A_i\subseteq S_i$ for $1\le i\le 3$.
	Therefore, the authors \cite{cho2023independent} proved that for any vertex $s$ in $S^A_3$, there exists a neighbor, say $s^*$, of $s$ with two neighbors in $N$. For each $r\in Z$, there must be a neighbor, say $r^*$, of $r$ that is in $N$. Furthermore, they defined a one-to-one function $f$ from $\hat{A}=A\cup S^A\cup Z$ to $N\cup S^A_1\cup S^A_2$ as follows.

	\begin{equation}\label{eq1}
		f(v)=\begin{cases}
			v, & v\in A\cup S^A_1\cup S^A_2,  \\
			v^*, & v\in S^A_3 \cup Z. \\
		\end{cases}
	\end{equation}
	It implies that
	\begin{equation}\label{eq2}
		|\hat{A}|\le |N|+|S^A_1|+|S^A_2|\le |S_1|+2|S_2|+3|S_3|+|S^A_1|+|S^A_2|\le 3|S|.
	\end{equation}
	%
	The second inequality is derived from the fact that $|N| = |S_1| + 2|S_2| + 3|S_3|$, and the third inequality is based on the observation that $S = S_1 \cup S_2 \cup S_3$ and $S^A_i\subseteq S_i$ for $1\le i\le 3$. This completes the proof of Theorem \ref{th0}.
	
	Since the function defined in {\rm{(\ref{eq1})}} is of significance, we may restate that this function possesses the following properties.
	\begin{property}{\upshape\cite{cho2023independent}}\label{property1}
		Let $f$ be a function as defined in (\ref{eq1}). Then the vertices in $\hat{A}$ have the following properties:
		
		{\rm(A)} If $v \in A \cup S^A_1 \cup S^A_2$, then $f(v) = v$.
		
		{\rm(B)} If $v \in S^A_3 \cup Z$, then $v^* = f(v)$ is a neighbor of $v$. In particular, if $v \in S^A_3$, then $v^*$ has two neighbors in $N$.
	\end{property}
	
	With the help of Function (\ref{eq1}), we can deduce three propositions. In this section, the definitions of the symbols $A$, $S_i$ ($i\in[3]$), $S^A_i$ ($i\in[3]$), $N, H, M, W, Z, \hat{A}$ are the same with those defined in Theorem \ref{th0}. Let $\overline{S^A_i}=S_i\setminus S^A_i$. Recall that $M$ represents the set of paths of length 1 in $H$ and $W$ denotes the set of vertices in $M$. We begin with a helpful lemma which will be frequently used in the following. In the remaining part of this section, let $G$ be a connected subcubic graph with $i(G)=3\rho(G)$.
	\begin{lemma}\label{pro1cl_1}
		Let $S$ be any maximal packing of $G$. If $A$ is a set that satisfies conditions $\ref{i}$-$\ref{iii}$ and $\hat{A}$ is the corresponding independent dominating set as defined above, then the following statements hold.
		
		{\rm(a)} $S_1=\emptyset$ and $S_2=S^A_2$.
		
		{\rm(b)}  The function $f$ defined in (\ref{eq1}) is a bijection from $\hat{A}=A\cup S^A_2\cup S^A_3\cup Z$ to $N\cup S^A_2$.
		
		{\rm(c)} If a vertex $x\in N\setminus A$ and $d_H(x)=1$, then $x\in W$. 
	\end{lemma}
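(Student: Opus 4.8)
The plan is to extract everything from the inequality chain in (\ref{eq2}), which under the hypothesis $i(G)=3\rho(G)$ must hold with equality throughout. Since $S$ is a maximal packing, we have $\rho(G)\ge|S|$, and $\hat A$ is an independent dominating set, so $i(G)\le|\hat A|$. Combining with (\ref{eq2}) gives $3|S|\le 3\rho(G)=i(G)\le|\hat A|\le 3|S|$, forcing $|\hat A|=3|S|$ and, crucially, forcing $S$ to be a \emph{maximum} packing (so $|S|=\rho(G)$) and every inequality in (\ref{eq2}) to be an equality. I would first record this equality-everywhere observation as the engine driving all three parts.

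For part (a), the third inequality $|S_1|+2|S_2|+3|S_3|+|S^A_1|+|S^A_2|\le 3|S|=3(|S_1|+|S_2|+|S_3|)$ rearranges to $|S^A_1|+|S^A_2|\le 2|S_1|+|S_2|$. Since $S^A_i\subseteq S_i$, the reverse-direction bookkeeping forces $|S^A_1|=|S_1|$, $|S^A_2|=|S_2|$ together with $2|S_1|+|S_2|-|S^A_1|-|S^A_2|=0$, so in fact $2|S_1|+|S_2|=|S^A_1|+|S^A_2|\le|S_1|+|S_2|$, giving $|S_1|\le 0$, hence $S_1=\emptyset$; and then $|S^A_2|=|S_2|$ with $S^A_2\subseteq S_2$ yields $S_2=S^A_2$. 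The one point needing care is making the inequalities line up with the correct signs, so I would write out the degree count $|N|=|S_1|+2|S_2|+3|S_3|$ explicitly and track which slack term vanishes.

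For part (b), the function $f$ is already known to be one-to-one into $N\cup S^A_1\cup S^A_2$ from the proof of Theorem \ref{th0}; by part (a) we have $S^A_1=\emptyset$, so the codomain is $N\cup S^A_2$. The first inequality in (\ref{eq2}) was $|\hat A|\le|N|+|S^A_1|+|S^A_2|=|N|+|S^A_2|=|N\cup S^A_2|$ (the union being disjoint since $S^A_2\subseteq S$ is disjoint from $N=N_G(S)$). Equality here means $f$ is onto, hence a bijection; and since $\hat A=A\cup S^A\cup Z$ with $S^A=S^A_1\cup S^A_2\cup S^A_3=S^A_2\cup S^A_3$ by part (a), the decomposition $\hat A=A\cup S^A_2\cup S^A_3\cup Z$ follows.

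For part (c), this is where the selection rules \ref{i}--\ref{iii} for $A$ must be invoked rather than pure counting. Suppose $x\in N\setminus A$ has $d_H(x)=1$; then $x$ lies on a path component of $H$. By condition \ref{i}, a path of length at least $2$ contributes both endpoints to $A$, and a degree-one vertex $x\notin A$ cannot be such an endpoint; so I expect the argument to show $x$ must lie on a length-$1$ path, i.e.\ an edge of the induced matching $M$, giving $x\in W$. The main obstacle is ruling out that $x$ is a non-endpoint interior vertex of a longer path or an endpoint whose exclusion from $A$ is consistent with maximal independence; I anticipate using the bijectivity from part (b), which leaves no room for any vertex of $N$ to be ``wasted,'' together with the maximal-independence condition \ref{ii}, to pin down that the only components of $H$ on which a non-$A$ degree-one vertex can sit are the matching edges. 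This coupling of the combinatorial selection conditions with the tightness extracted in parts (a)--(b) is the delicate step of the lemma.
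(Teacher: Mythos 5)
Your proposal is correct and follows essentially the same route as the paper: forcing equality throughout the chain (\ref{eq2}), extracting $|S^A_1|+|S^A_2|=2|S_1|+|S_2|$ and hence $2|S_1|+|S_2|\le|S_1|+|S_2|$ for (a), surjectivity of $f$ from the cardinality match $|\hat A|=|N|+|S^A_2|$ for (b), and condition \ref{i} for (c). The only remark is that the ``main obstacle'' you anticipate in (c) does not exist: a vertex with $d_H(x)=1$ is automatically an endpoint of a path component of $H$ (interior vertices have degree $2$), so condition \ref{i} alone forces that component to be a single edge of $M$, with no need to invoke part (b) or condition \ref{ii}.
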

	\begin{proof}
		(a) By definition, $|S|\le \rho(G)$ and $i(G)\le |\hat{A}|$. If $i(G)=3\rho (G)$, then by inequality (\ref{eq2}), $|\hat{A}|=3|S|$. Hence, we obtain the following equation:
		\begin{equation}\label{eq2*}
			|N|+|S^A_1|+|S^A_2|=|S_1|+2|S_2|+3|S_3|+|S^A_1|+|S^A_2|= 3(|S_1|+|S_2|+|S_3|)
		\end{equation}
		With the help this equation, we deduce $|S^A_1| + |S^A_2| = 2|S_1| + |S_2|$. Since $S^A_i\subseteq S_i$ for $i\in [3]$, it follows that $|S^A_i|\le |S_i|$ holds, thus (a) is true.
		
		(b) Because $f$ is a one-to-one function from $\hat{A}$ to $N\cup S^A_2$, and equation $(\ref{eq2*})$ implies that $|\hat{A}| = |N| + |S^A_2|$, it follows that $f$ is a bijection.
		
		(c) Since $d_H(x)=1$, $x$ has a neighbor $x'$ in $H$. If $xx' \notin M$, then $x$ would be an endpoint of a path of length at least 2 in $H$, thus $x\in A$ by condition \ref{i}, this is a contradiction. Hence, $xx'\in M$, it follows from the definition of $M$ that $x \in W$.
	\end{proof}
	For convenience, let $f^{-1}$ be the inverse function of $f$.
	Now we are ready to prove the following three propositions.
	\begin{proposition}\label{cubic}
		The graph $G$ is cubic and every maximal packing of $G$ is maximum.    
	\end{proposition}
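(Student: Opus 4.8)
The statement splits into two claims: every maximal packing of $G$ is maximum, and $G$ is cubic. The plan is to dispose of the first claim immediately. Fix any maximal packing $S$ and form the independent dominating set $\hat{A}$ as constructed above; then $i(G)\le|\hat{A}|$, while inequality (\ref{eq2}) gives $|\hat{A}|\le 3|S|\le 3\rho(G)$. Since $i(G)=3\rho(G)$ by hypothesis, the chain $i(G)\le|\hat{A}|\le 3|S|\le 3\rho(G)=i(G)$ collapses to equalities, so in particular $|S|=\rho(G)$ and $S$ is maximum. For the cubic claim I would first reduce it to a single statement about packings: every vertex $v$ of $G$ lies in some maximal packing $S'$ (extend $\{v\}$ greedily), and there $v\in S'_{d_G(v)}$. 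By Lemma~\ref{pro1cl_1}(a) applied to $S'$ we have $S'_1=\emptyset$, so already $\delta(G)\ge 2$; hence it suffices to prove that $S_2=\emptyset$ for \emph{every} maximal packing $S$, since then $d_G(v)=3$ for every $v$ and $G$ is cubic.

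To prove $S_2=\emptyset$, suppose not and pick $s\in S_2$. By Lemma~\ref{pro1cl_1}(a) we have $S^A_2=S_2\ni s$, so $s$ is not dominated by $A$ and its two neighbours $a_1,a_2$ lie in $N\setminus A$. Each $a_i$ has exactly one neighbour in $S$, namely $s$ (two would put two packing vertices at distance $2$). Using the bijection $f$ of Lemma~\ref{pro1cl_1}(b), which maps $S^A_3\cup Z$ onto $N\setminus A$, I would write $a_i=f(v_i)$ for a unique $v_i\in S^A_3\cup Z$. The case $v_i\in S^A_3$ is impossible, since then $a_i$ would have the second $S$-neighbour $v_i\ne s$; hence $v_i\in Z\subseteq R$, and $a_i$ has a neighbour $z_i:=v_i$ in $R$. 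As $A$ dominates $H=G[N]$ and $a_i\notin A$, the vertex $a_i$ also has a neighbour $b_i\in A\cap N$, and counting forces $N_G(a_i)=\{s,z_i,b_i\}$, so $d_H(a_i)=1$; then Lemma~\ref{pro1cl_1}(c) gives $a_i\in W$ with $a_ib_i\in M$ (and in particular $a_1\not\sim a_2$). Now I would perform the swap $A'=(A\setminus\{b_1\})\cup\{a_1\}$ inside the matching edge $a_1b_1$. Since this edge is a path of length $1$, condition \ref{i} is unaffected, and $A'$ is again a maximal independent set of $H$, so it satisfies \ref{i}--\ref{ii}; moreover $A'$ now dominates $s$ through $a_1$. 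Provided $A'$ also satisfies \ref{iii}, Lemma~\ref{pro1cl_1}(a) applied to $A'$ would give $s\in S_2=S^{A'}_2$, contradicting that $A'$ dominates $s$. This contradiction yields $S_2=\emptyset$ and finishes the proof.

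The step I expect to be the main obstacle is verifying that the swap respects the minimality condition \ref{iii}, i.e. that $|X(A')|\le|X(A)|$. Trading $b_1$ for $a_1$ changes membership in $X(\cdot)$ only for packing vertices adjacent to $a_1$ or to $b_1$: no degree-$3$ packing vertex is adjacent to $a_1$ (its unique $S$-neighbour is $s\in S_2$), so $X$ can only grow through the $S$-neighbour $s''$ of $b_1$ newly entering $X(A')$. I would therefore have to rule out this gain, showing that $s''$ fails the defining condition of $X$ for $A'$ (for instance because one of its remaining neighbours lies in $A'$ or outside $W$), or else run the same argument with $a_2,b_2$ in place of $a_1,b_1$ and argue that at least one of the two swaps is $X$-non-increasing. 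Controlling this bookkeeping for $X(\cdot)$, which is precisely what condition \ref{iii} was designed to govern, is the technical heart of the argument; the preceding degree count and the reduction to $S_2=\emptyset$ are routine.
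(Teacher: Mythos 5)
Your reduction of the proposition to the single statement ``$S_2=\emptyset$ for every maximal packing'' is sound, and everything up to the swap is correct and essentially identical to the paper's Claims \ref{pro1_cl2} and \ref{pro1_cl3}: the maximality-implies-maximum part, the use of Lemma \ref{pro1cl_1}(a) to kill degree-one vertices, and the argument via the bijection $f$ that the two neighbours $a_1,a_2$ of $s$ lie in $W\setminus A$ all match the paper. The genuine gap is exactly where you locate it, and neither of your proposed repairs closes it. The single swap $A'=(A\setminus\{b_1\})\cup\{a_1\}$ preserves \ref{i} and \ref{ii} but can strictly increase $|X(\cdot)|$: the unique packing neighbour $s''$ of $b_1$ may have all three neighbours in $W$ with $b_1$ as its only neighbour in $A$, in which case $s''$ enters $X(A')$ and \ref{iii} fails, so Lemma \ref{pro1cl_1}(a) cannot be applied to $A'$. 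Your fallback of trying $a_2,b_2$ instead does not help, because the symmetric obstruction can occur on both sides simultaneously; there is no reason at least one of the two single swaps is $X$-non-increasing.

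The missing idea, which is the actual content of the paper's proof, is to iterate rather than to rule out the bad case. When $s''=s_1$ has both remaining neighbours $s_{12},s_{13}$ in $W\setminus A$ (the paper's Claim \ref{pro1_cl4} shows this is the only alternative to the good case), one follows the chain: $s_{13}$ has an $A$-neighbour $s_{21}$ whose packing neighbour $s_2$ is examined in turn, and so on (Algorithm 1). All vertices encountered are distinct because the packing vertices are pairwise at distance at least $3$ and the chain stays inside $W$, so the process terminates at some $s_t$ having a second neighbour in $A$. One then performs the \emph{entire} chain of swaps at once, $A_{new}=(A\setminus\{s_{11},\dots,s_{t1}\})\cup\{y_1,s_{13},\dots,s_{(t-1)3}\}$, so that every $s_m$ along the chain remains dominated by an element of $A_{new}$ and $|X|$ does not increase; this legitimises the application of Lemma \ref{pro1cl_1}(a) and yields the contradiction. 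Without this iterated re-matching (or some substitute for it), your argument does not go through.
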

	\begin{proof}
		We will establish three claims and an algorithm to prove the conclusion. In the following proof, let $A$ be the set that satisfies conditions \ref{i}--\ref{iii}.
		
		\begin{claim}\label{pro1_cl2}
			There is no vertex of degree $1$ in $G$.
		\end{claim}
		\begin{proof}
			Assume $x$ is a vertex of degree $1$ in $G$. There exists a maximal packing $S$ containing $x$. Thus, $x\in S_1$, which contradicts Lemma \ref{pro1cl_1} (a).
		\end{proof}
		
		Next, we demonstrate that $G$ contains no vertices of degree 2. Assume $y$ is a vertex with $d_G(y) = 2$. Let $S$ be a maximal packing containing $y$, and let $\hat{A} = A \cup S^A_2 \cup S^A_3 \cup Z$ be the corresponding independent dominating set. By Lemma \ref{pro1cl_1} (a), $S_2 = S^A_2$, so $y \in S^A_2$.
		Let $y_1$ and $y_2$ be the two neighbors of $y$ in $G$. Then we obtain the following two claims:
		
		\begin{claim}\label{pro1_cl3}
			For each $i\in [2]$, $y_i\in W\setminus A$.
		\end{claim}
		\begin{proof}
			By the definition of $S^A_2$, it follows that $y_i \notin A$. Since $y_i \in N$ and $f$ is a bijection from $\hat{A} = A \cup S^A_2 \cup S^A_3 \cup Z$ to $N \cup S^A_2$, there exists a vertex $y_i' \in \hat{A}$ such that $f(y_i') = y_i$. If $y_i' \in A$ then $y_i = y_i'$ by Property \ref{property1} (A), a contradiction. If $y_i' \in S^A_2$ then $y_i = y_i'$ by Property \ref{property1} (A), but $y_i$ cannot belong to both $N$ and $S$. If $y_i' \in S^A_3$ then $y_iy_i' \in E(G)$ according to Property \ref{property1} (B), which means that $y_i$ is a common neighbor of $y$ and $y_i'$, but both $y$ and $y_i'$ belong to $S$, this is impossible as $S$ is a packing. Therefore, $y_i' \in Z$ and $y_iy_i' \in E(G)$ according to Property \ref{property1} (B). Now, $y_i$ has $\{ y, y_i'\}$ as its neighbors, since $\Delta(G) \le 3$, either $d_H(y_i) = 0$ or $d_H(y_i) = 1$. If $d_H(y_i) = 0$, by condition \ref{ii} of $A$, we obtain $y_i \in A$, which is impossible. Therefore, $d_H(y_i) = 1$, it follows from Lemma \ref{pro1cl_1} (c) that $y_i\in W$.
		\end{proof}
		Since $S_1=\emptyset$ and $S_2=S^A_2$, we observe that $A\subseteq N(\overline{S^A_3})$. According to the condition \ref{ii} of $A$, $A$ is a dominating set of $H$. Therefore, by Claim \ref{pro1_cl3}, $y_1$ has a neighbor, say $s_{11}$, in $ A\cap W$.
		Let $s_1\in \overline{S^A_3}$ be a neighbor of $s_{11}$ with $N(s_1)=\{s_{11},s_{12},s_{13}\}$. In the next claim, we show that $s_{12}$ and $s_{13}$ are contained in $W$.  
		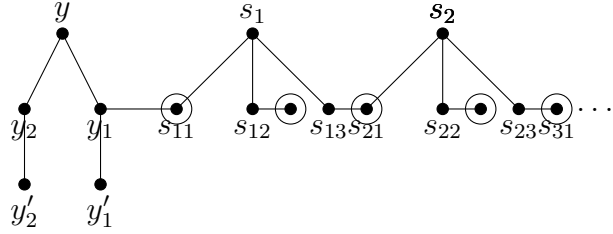
\begin{figure}[h]
			\centering
			\begin{tikzpicture}
			\foreach \i in {0.5,1.5,2.5,3.5,4,4.5,5,6,6.5,7,7.5}
			{
				\node[draw,circle,inner sep=1.5pt,fill] at (\i,0){};
			}
			\node[draw,circle,inner sep=1.5pt,fill] at (1,01){};
			\node[draw,circle,inner sep=1.5pt,fill] at (3.5,01){};
			\node[draw,circle,inner sep=1.5pt,fill] at (0.5,-01){};
			\node[draw,circle,inner sep=1.5pt,fill] at (1.5,-01){};
			\draw (0.5,0)node[anchor=north]{$y_2$}--(1,1)node[above]{$y$}--(1.5,0)node[anchor=north]{$y_1$}--(2.5,0)node[anchor=north]{$s_{11}$}--(3.5,1)node[above]{$s_1$}--(3.5,0)node[anchor=north]{$s_{12}$};
			
			\draw (3.5,0)--(4,0);
			\draw (3.5,1)--(4.5,0)node[anchor=north]{$s_{13}$}--(5,0)node[anchor=north]{$s_{21}$};
			\draw (1.5,0)--(1.5,-1)node[below]{$y_1'$};
			\draw (.5,0)--(.5,-1)node[below]{$y_2'$};
			\node[draw,circle,inner sep=1.5pt,fill] at (6,1){};
			\foreach \j in {5,6,7}
			{
				\draw (6,1)node[above]{$s_{2}$}--(\j,0);
			}
			\draw (6,0)node[below]{$s_{22}$}--(6.5,0);
			\draw (7,0)node[below]{$s_{23}$}--(7.5,0)node[below]{$s_{31}$};
			\node at (8,0) {$\ldots$};
			\draw (2.5,0) circle (.2);
			\draw (4,0) circle (.2);
			\draw (5,0) circle (.2);
			\draw (6.5,0) circle (.2);
			\draw (7.5,0) circle (.2);
			\end{tikzpicture}
			\caption{The induced subgraph $G[H_1]$ for Proposition \ref{cubic}, where the circled vertices belong to $A$.}
			\label{fig:cubicproof}
		\end{figure}
		
		\begin{claim}\label{pro1_cl4}
			For $i\in \{2,3\}$, $s_{1i}\in W\setminus A$.
		\end{claim}
		\begin{proof}
			If $s_{1i}\in A$ for some $i\in \{2,3\}$, then we can construct a new set $A'=A\setminus \{s_{11}\} \cup \{y_1\}$. Obviously, $A'$ satisfies conditions \ref{i}--\ref{iii}. Hence, $y\in S_2\setminus S^{A'}_2$, which implies $S_2\neq S^{A'}_2$, this contradicts Lemma \ref{pro1cl_1} (a). Note that $\Delta (H) \le 2$. If $d_H(s_{1i})=0$, by the condition \ref{ii} of $A$, $s_{1i}\in A$, which is impossible. If $d_H(s_{1i})=2$, we consider the preimage $f^{-1}(s_{1i})$. According to Lemma \ref{pro1cl_1} (b), $f^{-1}(s_{1i}) \in A\cup S^A_2\cup S^A_3\cup Z$.
			
			Since $s_{1i}\in N\setminus A$, by Property \ref{property1} (A), $f^{-1}(s_{1i})\notin A\cup S^A_2$.
			If $d_H(s_{1i})=2$, then $s_{1i}$ has no neighbor in $S^A_3\cup Z$, hence, $f^{-1}(s_{1i})\notin S^A_3\cup Z$ by Property \ref{property1} (B). Therefore, $f^{-1}(s_{1i})$ doesn't exist, which is impossible since $f$ is a bijection. Thus, we conclude that $d_H({s_{1i}})=1$, it follows from Lemma \ref{pro1cl_1} (c) that $s_{1i}\in W$.
		\end{proof}
		We now present an algorithm to generate a subgraph of $G$, as illustrated in Figure \ref{fig:cubicproof}. 
		\begin{algorithm}\label{alg}  
			\caption{A generation of $G[H_1]$}  
			\begin{algorithmic}[1]  
				
				\State Let $m=1$ and $H_1=N[y]$
				\While{$s_{m2}\in W\setminus A$ and $s_{m3}\in W\setminus A$}
				\State $H_1=H_1\cup N[s_m]$		
				\State $m=m+1$
				\State Set $s_{m1}$ be the neighbor of $s_{(m-1)3}$ in $H$
				\State Set $N(s_m)=\{s_{m1},s_{m2},s_{m3}\}$ 			 
				\EndWhile  
				\State \textbf{return} $G[H_1]$
			\end{algorithmic}  
		\end{algorithm}
		
		Note that Algorithm 1 is valid because Claim \ref{pro1_cl4} guarantees that the neighbors of $s_1$ satisfy the initial conditions of the algorithm. We claim that Algorithm 1 should terminate at some vertex $s_t$ $(t\geq 2)$. To prove this, we need to show that the vertices in the algorithm are different from each other, which means that the algorithm will not return to a previous vertex. Let $Q=\{s_{mj}:m\ge 1~ \text{and}~j\in [3]\}$. First, we show that all vertices in $Q$ are different. Let $k_1$ and $k_2$ be two positive integers and let $j_1,j_2\in [3]$. If $s_{k_1j_1}\in Q$ and $s_{k_2j_2}\in Q$ are the same vertex, then $d(s_{k_1},s_{k_2})\le 2$, a contradiction to that $S$ is a packing. Next, we show that any two vertices of $Q$ have no common neighbor in $H$. Note that $s_{11}\in A\cap W$, and by Claim \ref{pro1_cl4}, we have $s_{1i}\in W\setminus A$ for each $i\in \{2,3\}$. Moreover, for $m\geq 2$, we have $s_{m1}\in A\cap W$ according to line 5 of Algorithm 1, and $\{s_{m2},s_{m3}\}\subseteq W\setminus A$ according to line 2 in Algorithm 1. 
		This leads to the conclusion that $Q$ is a subset of $W$. Consequently, any two vertices of $Q$ have no common neighbors in $H$. 
		Therefore Algorithm 1 must terminate at some vertex $s_t$ since $G$ is a finite graph. If both $s_{t2}$ and $s_{t3}$ are outside of $A$, then by a similar argument of Claim \ref{pro1_cl4}, we can continue the Algorithm, which contradicts the assumption. Therefore, there exists some $j \in \{2,3\}$ such that $\{s_{t1}, s_{tj}\} \subseteq A$.
		
		Let $A_{new}=(A\setminus \{s_{11},s_{21},\ldots, s_{t1}\}) \cup \{y_1,s_{13},s_{23} \ldots,s_{(t-1)3}\}$. 
		Then $A_{new}$ is a vertex set satisfying the conditions \ref{i}--\ref{iii}, (this is true, because $s_t$ is dominated by $s_{tj}$) hence, $y\in S_2\setminus S^{A_{new}}_2$, a contradiction to Lemma \ref{pro1cl_1} (a). Hence, $G$ is $3$-regular. Hence, $f$ is a bijection from $\hat{A}=A\cap S^A_3\cap Z$ to $N$. It follows that $|\hat{A}|=|N|=3|S|$.

		Furthermore, each maximal packing is also a maximum packing, as $i(G)=3\rho (G)$ and $3\rho (G)\ge 3|S|= |\hat{A}|\ge i(G)$ for every maximal packing $S$.
	\end{proof}
	
	Next, we study the structure of the induced graph $G[N(S)]$. Before proving Proposition \ref{H}, we present the following lemma.
	
	\begin{lemma}\label{pro2}
		Let $S$ be a maximal packing of $G$. Then there exists a vertex set $A$ that satisfies conditions $\ref{i}$-$\ref{iii}$ such that $S^A_3=\emptyset$.
		
	\end{lemma}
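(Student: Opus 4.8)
The plan is to argue by contradiction through an augmenting exchange along the induced matching $M$, modeled on Algorithm \ref{alg}. By Proposition \ref{cubic} the graph $G$ is cubic, so $S_1=S_2=\emptyset$, $S=S_3$, $S^A=S^A_3$, and $\hat{A}=A\cup S^A_3\cup Z$ with $f$ a bijection onto $N$ by Lemma \ref{pro1cl_1}(b). Among all sets satisfying \ref{i}--\ref{iii} I would pick one for which $|S^A_3|$ is as small as possible, and suppose for contradiction that $S^A_3\neq\emptyset$; fix $s\in S^A_3$.

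First I would pin down the local structure at $s$ using the bijection $f$ together with Property \ref{property1}. For each neighbor $x\in N(s)$ we have $x\notin A$, so $f^{-1}(x)\in S^A_3\cup Z$ and $x$ is adjacent to $f^{-1}(x)$ by Property \ref{property1}(B). Since $s$ is already adjacent to $x$ and $S$ is a packing, $f^{-1}(x)\in S^A_3$ forces $f^{-1}(x)=s$, i.e.\ $x=f(s)=s^*$ has $d_H(s^*)=2$; for the remaining two neighbors $f^{-1}(x)\in Z\subseteq R$, whence $d_H(x)=1$ and $x\in W$ by Lemma \ref{pro1cl_1}(c). Thus $N(s)=\{s^*,a,b\}$ with $d_H(s^*)=2$ and $a,b\in W$, and because $A$ dominates $H$ the $M$-partners $a',b'$ of $a,b$ lie in $A$. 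In particular this re-proves $X(A)=0$.

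Next comes the exchange step: flip the matching edge at $a$, setting $A'=(A\setminus\{a'\})\cup\{a\}$. I would check that $A'$ again satisfies \ref{i}--\ref{iii}: independence and maximality in $H$ are untouched because $\{a,a'\}$ is a $P_2$-component, condition \ref{i} is untouched because no path of length at least $2$ is affected, and the same local analysis keeps $X(A')=0$. Now $s$ is dominated by $a$, so $s\notin S^{A'}_3$, and the only vertex that can lose domination is the unique $S$-neighbor $s_{a'}$ of $a'$. If $s_{a'}$ still has a neighbor in $A'$ then $|S^{A'}_3|<|S^A_3|$, contradicting minimality. Otherwise $s_{a'}$ becomes a new defect, and I would iterate the flip at one of its $W$-neighbors distinct from $a'$ (so as not to undo the previous step), generating an alternating chain $s=t_0,t_1=s_{a'},t_2,\dots$ of vertices of $S$. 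As in Algorithm \ref{alg}, consecutive vertices are joined through a matching edge, so a packing-distance argument shows the $t_i$ are pairwise distinct; since $G$ is finite the chain must terminate, and termination can only occur at a flip that creates no new undominated vertex, yielding $|S^{A'}_3|<|S^A_3|$ and the desired contradiction.

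The part I expect to be delicate is the iteration: I must verify that every intermediate set still obeys \ref{i}--\ref{iii} (in particular that the flips never raise $|X|$ above its minimum value $0$), that a newly undominated vertex $t_{i+1}$ always inherits the same rigid structure $N(t_{i+1})=\{t_{i+1}^*,\,\text{two }W\text{-vertices}\}$ so that a legal next flip exists, and that the chain cannot close up into a cycle that merely permutes the defects without reducing their number. Handling these points --- essentially importing the distinctness-and-termination bookkeeping of Algorithm \ref{alg} into the present exchange --- is where the main work lies; the local structure derived from $f$ and Property \ref{property1} is precisely what makes each of these verifications go through.
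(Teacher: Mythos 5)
Your proposal is essentially the paper's own proof: take $A$ minimizing $|S^A_3|$, use the bijection $f$ together with Property \ref{property1} to show that a defect $s\in S^A_3$ has exactly one neighbor of degree $2$ in $H$ and two neighbors in $W\setminus A$ whose matching partners lie in $A$, and then run an alternating exchange along matching edges that must terminate, contradicting minimality. The only substantive difference is bookkeeping: the paper first grows the entire chain $s_1,s_2,\dots,s_k$ (its Algorithm 2) and performs one swap at the very end, whereas you flip one matching edge at a time. One claim in your sketch is false as stated: an intermediate set $A'$ need \emph{not} satisfy condition \ref{iii}, because a newly created defect has all three of its neighbors in $(N\setminus A')\cap W$ and therefore lies in $X(A')$, giving $|X(A')|\ge 1>0=|X(A)|$. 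This does not sink the argument --- only the final set, produced by the terminating flip that creates no new defect, needs to satisfy \ref{i}--\ref{iii}, and for it $X=\emptyset$ holds because every remaining undominated vertex of $S$ still has a degree-$2$ neighbor in $H$ and hence cannot belong to $X$ --- but it is precisely the trap the paper sidesteps by deferring the swap to the end, and your write-up should either drop the claim about intermediate sets or adopt the paper's all-at-once exchange.
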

	
	\begin{proof}
		Let $|S^A_3|$ be the minimum among all choices of $A$ satisfying conditions (i)-(iii). 
		We now prove that $S^A_3=\emptyset$.
		By contradiction, assume $s_1\in S^A_3$. Let $N(s_1)=\{s_{11},s_{12},s_{13}\}$ with $d_H(s_{11})\geq d_H(s_{12})\geq d_H(s_{13})$. Then we have a claim as follows. 
		\begin{claim}\label{pro2_cla1}			
			The degrees of $s_{11},s_{12},s_{13}$ are $2$, $1$, $1$ in $H$ respectively. In particular, $s_{12}$ and $s_{13}$ belong to $W\setminus A$.
		\end{claim}
		
		\begin{proof}
			Note that, according to the definition of $S^A_3$ no vertex in $N(s_1)$ belongs to $A$.
			By condition \ref{ii} of $A$ and $\Delta(G)=3$,  
			$1\leq d_H(s_{1i})\leq 2$ for each $i \in [3]$.
			Moreover, since $f$ is a bijection and by Property \ref{property1} (B), $u$ has exactly one neighbor of degree 2 in $H$, so $d_H(s_{11}) = 2$ and $d_H(s_{12}) =d_H(s_{13})=1$. 
			Therefore, by Lemma \ref{pro1cl_1} (c), $u_2$ and $u_3$ belong to $W\setminus A$.
		\end{proof}

		By Claim \ref{pro2_cla1}, $s_{13}\in W\setminus A$. Since $A$ is a dominating set of $H$, $s_{13}$ has a neighbor, say $s_{21}$, that belongs to $A$. Let $s_2\in \overline{S^A_3}$ with $N(s_2)=\{s_{21},s_{22},s_{23}\}$. We claim that $s_{2i}\in W\setminus A$ for $i\in \{2,3\}$. If $s_{2i}\in A$, then we construct a set $A'=(A\setminus \{s_{21}\})\cup \{s_{13}\}$. It's easy to verify that $A'$ satisfies conditions \ref{i}--\ref{iii}, but $s_1\notin S^{A'}_3$ and $|S^{A'}_3|<|S^{A}_3|$, a contradiction. Hence, neither $s_{22}$ nor $s_{23}$ is in $A$. Next, we show that $s_{2i}\in W$. Note that $\Delta (H) \le 2$.
		If $d_H(s_{2i})=0$, by the condition \ref{ii} of $A$, $s_{2i}\in A$, which is impossible. If $d_H(s_{2i})=2$, we consider the preimage $f^{-1}(s_{2i})$ of $s_{2i}$. By Lemma \ref{pro1cl_1} (b) and Proposition \ref{cubic}, $f^{-1}(s_{2i}) \in A\cup S^A_3\cup Z$.   
		Since $s_{2i}\in N\setminus A$, by Property \ref{property1} (A), $f^{-1}(s_{1i})\notin A$.
		Since $s_{2i}$ has no neighbor in $S^A_3\cup Z$, $f^{-1}(s_{2i})\notin S^A_3\cup Z$ by Property \ref{property1} (B). Therefore, $f^{-1}(s_{2i})$ doesn't exist, this is impossible because $f$ is a bijection. Thus, we conclude that $d_H({s_{2i}})=1$, by Lemma \ref{pro1cl_1} (c), $s_{2i}\in W$.
		\begin{figure}[h]
			\centering
			\begin{tikzpicture}
			\foreach \i in {2,2.5,3,3.5,4,4.5,5,6,6.5,7,7.5}
			{
				\node[draw,circle,inner sep=1.5pt,fill] at (\i,0){};
			}
			\draw (1.8,0)--(3.2,0);
			\node[draw,circle,inner sep=1.5pt,fill] at (3.5,01){};
			
			\draw (2.5,0)node[anchor=north]{$s_{11}$}--(3.5,1)node[above]{$s_1$}--(3.5,0)node[anchor=north]{$s_{12}$};
			
			\draw (3.5,0)--(4,0);
			
			\draw (3.5,1)--(4.5,0)node[anchor=north]{$s_{13}$}--(5,0)node[anchor=north]{$s_{21}$};
			
			\node[draw,circle,inner sep=1.5pt,fill] at (6,1){};
			\foreach \j in {5,6,7}
			{
				\draw (6,1)node[above]{$s_{2}$}--(\j,0);
			}
			\draw (6,0)node[below]{$s_{22}$}--(6.5,0);
			\draw (7,0)node[below]{$s_{23}$}--(7.5,0)node[below]{$s_{31}$};
			\node at (8,0) {$\ldots$};
			
			\draw (4,0) circle (.2);
			\draw (5,0) circle (.2);
			\draw (6.5,0) circle (.2);
			\draw (7.5,0) circle (.2);
			\end{tikzpicture}
			\caption{The induced subgraph $G[H_2]$ for Lemma \ref{pro2}, where the circled vertices belong to $A$.}
			\label{fig:cubicproof2}
		\end{figure}
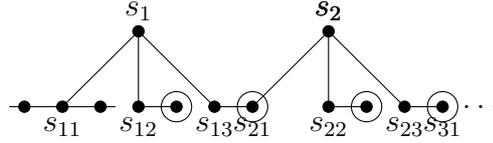

		We now present an algorithm to generate the subgraph $G[H_2]$ of $G$ (see Figure \ref{fig:cubicproof2}):
		\begin{algorithm}[H]\label{alg2}  
			\caption{A generation of $G[H_2]$}  
			\begin{algorithmic}[1]  
				
				\State Let $m=2$ and $H_1=N[s_1]$
				\While{$s_{m2}\in W\setminus A$ and $s_{m3}\in W\setminus A$}
				\State $H_1=H_1\cup N[s_m]$		
				\State $m=m+1$
				\State Set $s_{m1}$ be the neighbor of $s_{(m-1)3}$ in $H$
				\State Set $N(s_m)=\{s_{m1},s_{m2},s_{m3}\}$ 			 
				\EndWhile  
				\State \textbf{return} $G[H_2]$
			\end{algorithmic}  
		\end{algorithm}
		By a similar argument as in the proof of Proposition \ref{pro2}, we can establish that this algorithm must terminate at some vertex $s_k$ where there exists an index $j\in \{2,3\}$ such that $\{s_{k1}, s_{kj}\} \subseteq A$. We define $A_{new}$ as $(A\setminus \{s_{21}, \ldots, s_{k1}\}) \cup \{s_{13}, s_{23}, \ldots, s_{(k-1)3}\}$. It can be verified that $A_{new}$ satisfies conditions \ref{i}--\ref{iii}. However, it follows that $s_1$ is not a member of $S^{A_{new}}_3$, which contradicts the minimality of $|S^A_3|$.
	\end{proof}		
	\begin{proposition}\label{H}
		Let $S$ be a maximal packing of $G$. Then $G[N(S)]$ consists of matching edges and isolated vertices.     
	\end{proposition}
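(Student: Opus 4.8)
The plan is to pass to the favorable choice of $A$ supplied by Lemma \ref{pro2}, isolate a single structural fact about the degree-$2$ vertices of $H=G[N]$, and then rule out both cycles and longer paths by playing this fact against the independence of $A$. First I would fix the setup: by Proposition \ref{cubic} the graph $G$ is cubic, so $S_1=S_2=\emptyset$ and $S=S_3$, and consequently $S^A_1=S^A_2=\emptyset$ for every admissible $A$. Using Lemma \ref{pro2} I choose a set $A$ satisfying conditions \ref{i}--\ref{iii} with $S^A_3=\emptyset$, so that $S^A=\emptyset$ and $\hat{A}=A\cup Z$. By Lemma \ref{pro1cl_1} (b) together with Proposition \ref{cubic}, $f$ is then a bijection from $A\cup Z$ onto $N$; since $f(v)=v$ for $v\in A$ by Property \ref{property1} (A), we get $f(A)=A$, and bijectivity forces $f(Z)=N\setminus A$. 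Hence, by Property \ref{property1} (B), each vertex of $N\setminus A$ equals $f(z)=z^*$ for some $z\in Z$ and therefore has a neighbor in $Z\subseteq R$.

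The key step is the claim that every vertex $x\in N$ with $d_H(x)=2$ lies in $A$. To see this, note that since $G$ is cubic and $S$ is a packing, $x$ has exactly one neighbor in $S$ (two neighbors in $S$ would place two elements of $S$ at distance $2$). Together with its two neighbors inside $H$ this uses all three edges incident with $x$, so $x$ has no neighbor in $R$. If $x$ belonged to $N\setminus A$, the previous paragraph would give a neighbor of $x$ in $Z\subseteq R$, a contradiction; thus $x\in A$.

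With this claim the result follows quickly. As each vertex of $N$ has exactly one neighbor in $S$, we have $\Delta(H)\le 2$, so $H$ is a disjoint union of paths, cycles and isolated vertices. If $H$ contained a cycle, any edge of it would join two degree-$2$ vertices, both forced into $A$ by the claim, contradicting that $A$ is independent in $H$ by condition \ref{ii}. If $H$ contained a path of length at least $2$, say $v_0v_1\cdots v_k$ with $k\ge 2$, then the endpoint $v_0$ lies in $A$ by condition \ref{i} while its neighbor $v_1$ has $d_H(v_1)=2$ and hence lies in $A$ by the claim, and $v_0v_1\in E(H)$ again contradicts independence. Therefore every component of $H$ is an isolated vertex or a single edge, which is exactly the assertion. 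I expect the only delicate point to be the reduction to $S^A=\emptyset$ and the attendant description of $f$ (identity on $A$, image $N\setminus A$ on $Z$); once that is correctly in place, the degree-$2$ claim and the final case analysis are routine.
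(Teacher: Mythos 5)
Your proposal is correct and takes essentially the same route as the paper: after invoking Lemma \ref{pro2} to get $S^A=\emptyset$ and viewing $f$ as a bijection from $A\cup Z$ onto $N$, you show every degree-$2$ vertex of $H$ lies in $A$ (it has no neighbor in $R$, so its $f$-preimage cannot lie in $Z$) and then use the independence of $A$ together with condition \ref{i} to exclude cycles and paths of length at least $2$. The only difference is cosmetic: the paper splits the final case analysis into cycles/$P_k$ with $k\ge 4$ versus $P_3$, while you handle all paths of length $\ge 2$ uniformly via an endpoint and its degree-$2$ neighbor.
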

	
	\begin{proof}
		By Lemma \ref{pro2}, let $A$ be a set that satisfies the conditions \ref{i}--\ref{iii} and $S^A_3=\emptyset$. 
		As we have defined earlier, $H=G[N(S)]$ and $N=N(S)$. It follows from Propositions \ref{cubic} and Lemma \ref{pro2} that $\hat{A}=A\cup Z$ and $|\hat{A}|=|N|$.
		For any vertex $x\in N$, if $d_H(x)=2$, it means $x$ has no neighbor in $Z$. Consequently, $g^{-1}(x)\notin Z$, leading to the conclusion that $g^{-1}(x)\in A$. By Property \ref{property1}, we can deduce that $x\in A$. If there is a cycle or a path $P_k$ with $k\ge 4$ in $H$, then there are two adjacent vertices of degree 2, and they all belong to $A$, which is impossible since $A$ is an independent set. 
		If there is a $P_3$ in $H$, the endpoints of the path would belong to $A$ by condition \ref{i} of $A$. This implies that $V(P_3)\subseteq A$, which is impossible. Hence, $\Delta (H)\le 1$, as desired.
	\end{proof}
	
	Recall that $R=V(G)\setminus N[S]$, let $\overline{Z}=R\setminus Z$ and $Z^*=\{f(v) |v\in Z\}$. Then we obtain the following proposition. 
	
	\begin{proposition}\label{prop3}
		Let $S$ be a maximal packing of $G$.
		If $A$ is a set that satisfies the conditions $\ref{i}$-$\ref{iii}$ and $S^A_3=\emptyset$, then we have the following facts.
		
		$(1)$ $Z^*$ and $A$ is a partition of $N$.
		
		$(2)$ Every vertex $z\in Z$ has a neighbor $z^*\in Z^*$, and two neighbors in $\overline{Z}$.
		
		$(3)$ Every vertex in $A$ has at least one neighbor in $\overline{Z}$.
		
		$(4)$ If $uv\in E(H)$, then one of $\{u,v\}$ is in $A$, and another vertex is in $Z^*$.
	\end{proposition}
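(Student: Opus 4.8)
The plan is to read off parts (1), (3), (4) directly from the bijectivity of $f$ together with Propositions \ref{cubic} and \ref{H}, and to spend the real effort on part (2). Recall the standing situation here: $G$ is cubic, $S^A_1=S^A_2=S^A_3=\emptyset$, so $\hat{A}=A\cup Z$, the map $f$ of (\ref{eq1}) is a bijection of $\hat{A}$ onto $N$ with $f|_A=\mathrm{id}$ and $f(Z)=Z^*$, and $H=G[N]$ is a disjoint union of matching edges and isolated vertices. Moreover $|\hat{A}|=|N|=3|S|=i(G)$, so $\hat{A}$ is a \emph{minimum} independent dominating set; every vertex of $N$ has exactly one neighbour in $S$; every vertex of $Z\subseteq R$ has none; and no vertex of $Z$ has a neighbour in $A$, since $Z\subseteq T$ is undominated by $A$.

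For (1), since $f$ is a bijection onto $N$ with $f(A)=A$ and $f(Z)=Z^*$, and $A\cap Z=\emptyset$ (as $A\subseteq N$ while $Z\subseteq R$), the sets $A$ and $Z^*$ partition $N$. For (4), if $uv\in E(H)$ then by (1) and the independence of $A$ in $H$ at most one of $u,v$ lies in $A$; at least one does, for otherwise $u,v\in Z^*$ would have each other as their only neighbour in $H$ and thus be undominated by $A$, contradicting that $A$ is a dominating set of $H$; hence exactly one endpoint lies in $A$ and the other in $Z^*$. For (3), each $a\in A$ has one neighbour in $S$ and at most one in $N$ (as $\Delta(H)\le 1$ by Proposition \ref{H}), hence at least one neighbour in $R$; that neighbour cannot lie in $Z$ (it would be dominated by $a\in A$), so it lies in $\overline{Z}$.

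It remains to prove (2), which is where I expect the difficulty. The neighbour $z^*=f(z)\in Z^*$ is supplied by Property \ref{property1}(B); since $z$ has no neighbour in $S$ and $Z$ is independent, it suffices to show that $z$ has \emph{no} neighbour in $N$ besides $z^*$. Dualising through the bijection $f|_Z\colon Z\to Z^*$, this is equivalent to showing that each $w\in Z^*$ has exactly one neighbour in $Z$. Suppose not: let $w=f(z_0)$ have a second neighbour $z_1\in Z$. A degree count shows that the three neighbours of $w$ are its unique $S$-neighbour together with $z_0$ and $z_1$, so $w$ is isolated in $H$. The natural move is to form $\hat{A}''=(\hat{A}\setminus\{z_0,z_1\})\cup\{w\}$: this set is independent (the only neighbours of $w$ are $z_0$, $z_1$ and its $S$-neighbour, none of which lies in $\hat{A}''$) and has size $|\hat{A}|-1$, so if it dominated $G$ we would contradict $|\hat{A}|=i(G)$.

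The hard part is therefore re-domination: a neighbour $\nu$ of $z_0$ or $z_1$ is left uncovered by $\hat{A}''$ only if that $z_i$ was its unique dominator in $\hat{A}$. Using the bijection together with parts (1), (3), (4), I would check that any such private $\nu$ must itself be a vertex of $Z^*$ that is isolated in $H$ and whose two $Z$-neighbours form again a pair of removed vertices of $Z$ --- that is, $\nu$ is an offending vertex of exactly the same type as $w$. This self-similarity is what permits a generation procedure analogous to those in the proofs of Proposition \ref{cubic} and Lemma \ref{pro2}: starting from $w$ one follows the forced common-neighbour vertices, obtaining an alternating sequence of $Z$- and $Z^*$-vertices that are pairwise distinct (because $S$ is a packing) and hence, $G$ being finite, terminates. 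At termination the chain can be closed by deleting the traversed $Z$-vertices and inserting the traversed $Z^*$-vertices, producing an independent dominating set of size at most $|\hat{A}|-1$ and contradicting the minimality $|\hat{A}|=i(G)$. I expect the genuinely delicate points to be verifying that the terminating swap is simultaneously independent, dominating, and strictly smaller, and that the chain cannot close on itself in a way that blocks this final swap; the packing property of $S$ and the partition $N=A\cup Z^*$ are the tools I would use to keep the bookkeeping finite and consistent.
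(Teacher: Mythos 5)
Your treatments of parts (1), (3) and (4) are correct and essentially identical to the paper's. The problem is part (2). You correctly reduce to the claim that no vertex $w=f(z_0)\in Z^*$ has a second neighbour $z_1\in Z$, and you correctly observe that such a $w$ would have neighbourhood $\{s,z_0,z_1\}$ with $s\in S$, hence would be \emph{isolated in $H$}. But at that exact moment you already have the contradiction and do not need any swap: an isolated vertex of $H$ lying outside $A$ violates condition \ref{ii}, which requires $A$ to be a \emph{maximal} independent set of $H$ (one could add $w$ to $A$); and $w\notin A$ because $w\in Z^*$ and $N=A\sqcup Z^*$ by your part (1). This is precisely the paper's argument, stated there for the dual configuration (a vertex $z\in Z$ with two neighbours $z_1,z_2\in Z^*$, where the one of $z_1,z_2$ not equal to $f(z)$ is shown to be isolated in $H$ and hence forced into $A$).

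Instead of this one-line finish, you launch a swap-and-chain argument against the minimality of $|\hat{A}|$, and that argument is both incomplete and, as sketched, not salvageable in the form you describe. The re-domination step is left entirely unverified --- you yourself flag independence, domination, strict decrease, and non-degenerate termination of the chain as open --- and the self-similarity claim driving the chain is wrong: a vertex left uncovered after deleting $z_0,z_1$ is a private neighbour of $z_0$ or $z_1$ lying in $\overline{Z}\subseteq R$, whereas you assert it "must itself be a vertex of $Z^*$", which is impossible since $Z^*\subseteq N$ and $\overline{Z}\cap N=\emptyset$. So the proposed iteration does not reproduce an offending vertex of the same type, and the generation procedure has no reason to close up into a smaller independent dominating set. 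The gap is real, but it is entirely avoidable: replace the whole second half of your argument for (2) by the appeal to condition \ref{ii}.
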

	\begin{proof}
		
		
		
		$(1)$ It is straightforward to deduce (1) via the bijection $f$ defined in Function (\ref{eq1}).
		
		$(2)$ It is evident from Property \ref{property1} (B) that each vertex in $Z$ has at least one neighbor in $Z^*$. Suppose there exists a vertex $z\in Z$ that has two neighbors, $z_1$ and $z_2$, in $Z^*$. Since $f$ is a bijection, without loss of generality, let $f(z)=z_1$. Then there exists a vertex $z'\in Z\setminus \{z\}$ such that $f(z')=z_2$. It follows from Proposition 1 that $\Delta(G)=3$, so $z_2$ is isolated in $H$. Thus $z_2\in A$ by condition (ii), which is a contradiction. Therefore, every vertex of $Z$ has exactly one neighbor in $Z^*$ and two neighbors in $\overline{Z}$.
		
		$(3)$ It is easy to deduce that (3) holds from Propositions \ref{cubic} and \ref{H}.
		
		$(4)$ By Proposition \ref{H} and the conditions of $A$, exactly one of the vertices $\{u,v\}$ belongs to $A$. Moreover, it follows from $(1)$ that the other vertex is in $Z^*$.
		
	\end{proof}

	\section{Proof of Theorem \ref{packingtheorem}}
	Before providing the proof of Theorem \ref{packingtheorem}, we would like to present a fundamental observation and introduce a lemma. By employing these, we will demonstrate Theorem \ref{packingtheorem} through the calculation of the clique number of the power graph associated with a given graph. The \textit{clique number}, in this context, refers to the maximum cardinality of complete subgraph for the graph.
	\begin{observation}\label{obser} {\upshape\cite{2017Packing}}
		If $G$ is a graph, then $\rho (G)=\alpha(G^2)$.
	\end{observation}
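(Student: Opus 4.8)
The plan is to show that the packings of $G$ and the independent sets of $G^2$ are literally the same collection of vertex subsets; the equality of the two invariants then follows at once by taking maximum cardinalities on both sides. So rather than comparing the numbers directly, I would prove an equality of set families and read off the conclusion.

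First I would unwind the definition of the square graph. By definition, for distinct vertices $u,v$ we have $uv\in E(G^2)$ precisely when $1\le d_G(u,v)\le 2$. Consequently $u$ and $v$ are \emph{non}-adjacent in $G^2$ exactly when $d_G(u,v)\ge 3$: here I use that distinct vertices always satisfy $d_G(u,v)\ge 1$, so the only way for the distance to avoid the set $\{1,2\}$ is for it to be at least $3$. This step is the one place where a (very mild) verification is needed, and it is really the crux of the translation.

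Next I would combine this with the definitions of the two sets in question. A set $S\subseteq V(G)=V(G^2)$ is independent in $G^2$ if and only if every pair of distinct vertices of $S$ is non-adjacent in $G^2$, i.e. if and only if $d_G(u,v)\ge 3$ for all distinct $u,v\in S$. This is verbatim the defining condition for $S$ to be a packing of $G$. Hence a vertex set is a packing of $G$ if and only if it is an independent set of $G^2$, so the family of packings of $G$ coincides exactly with the family of independent sets of $G^2$.

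Finally, since the two families are identical, their maximal members and in particular their members of maximum cardinality coincide, which gives $\rho(G)=\alpha(G^2)$. I do not expect any genuine obstacle here: the statement is essentially a direct dictionary between the ``distance at least three'' condition defining a packing and the ``non-adjacency in the square'' condition defining independence in $G^2$, and the only subtlety is the trivial observation that distinct vertices are at distance at least one.
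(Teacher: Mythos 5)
Your proof is correct and is exactly the intended argument: the paper states this as an observation cited from Henning--Klostermeyer without proof, and the standard justification is precisely your dictionary identifying packings of $G$ with independent sets of $G^2$ via the equivalence $uv\notin E(G^2)\iff d_G(u,v)\ge 3$ for distinct $u,v$. No gaps; the handling of the degenerate case (distinct vertices have distance at least $1$, and vertices in different components have distance exceeding $2$) is adequately covered.
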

	
	\begin{lemma}{\upshape\cite{henning2014new}} \label{keylemma}
		If $G$ is a graph of order $n$ and $p$ is an integer, such that $(A)$ below holds, then $\alpha(G)\ge \frac{2n}{p}$.
		
		$(A)$: For every clique $X$ in $G$ there exists a vertex $x\in X$, such that $d(x)< p-|X|$.
	\end{lemma}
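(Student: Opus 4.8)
The plan is to argue by induction on the order $n$, after first recording that hypothesis $(A)$ is hereditary: if $X$ is a clique of an induced subgraph $G'=G-D$, then $X$ is also a clique of $G$, and degrees in $G'$ never exceed degrees in $G$, so the vertex $x\in X$ furnished by $(A)$ for $G$ witnesses $(A)$ for $G'$ with the same $p$. Hence every induced subgraph again satisfies $(A)$ and the induction is legitimate. The inductive engine I would use is the following reduction: it suffices to produce in $G$ an independent set $I_0$ whose closed neighborhood satisfies $|N[I_0]|\le \tfrac{p}{2}\,|I_0|$. Indeed, writing $G'=G-N[I_0]$, which has order $n-|N[I_0]|$ and still satisfies $(A)$, the induction hypothesis gives $\alpha(G')\ge \tfrac{2}{p}\bigl(n-|N[I_0]|\bigr)$; since $I_0$ together with any independent set of $G'$ is independent in $G$, we obtain $\alpha(G)\ge |I_0|+\alpha(G')\ge |I_0|+\tfrac{2n}{p}-\tfrac{2}{p}|N[I_0]|\ge \tfrac{2n}{p}$, the final step using exactly $|N[I_0]|\le \tfrac{p}{2}|I_0|$.

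The easy regime is when $G$ has a vertex $v$ of small degree, namely $d(v)\le \lfloor p/2\rfloor-1$. Then I would take $I_0=\{v\}$ and $N[I_0]=N[v]$, so that $|N[I_0]|=d(v)+1\le \lfloor p/2\rfloor\le \tfrac{p}{2}=\tfrac{p}{2}|I_0|$, and the reduction above closes the induction. The base case $n=0$ is trivial, so it remains to treat graphs of large minimum degree.

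The main obstacle is precisely the complementary regime, where $\delta(G)\ge \lfloor p/2\rfloor$. Applying $(A)$ to singleton cliques yields only $d(x)\le p-2$ for the minimum-degree vertex, so here a single closed neighborhood $N[v]$ may have size up to $p-1$, far above the $\tfrac{p}{2}$ budget needed to recurse on one vertex. The full force of $(A)$ must therefore be extracted through larger cliques: for a clique $X$ the inequality $d(x)<p-|X|$ together with $d(x)\ge |X|-1$ forces $|X|<\tfrac{p+1}{2}$, so $(A)$ caps the clique number at $\omega(G)\le \lceil (p-1)/2\rceil$. Thus in this regime the graph has high minimum degree yet no large clique, which means neighborhoods are sparse and non-adjacent vertices are forced to share many common neighbors. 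I would exploit this to build an independent set $I_0$ with $|I_0|\ge 2$ whose closed neighborhood is efficiently dominated, i.e. $|N[I_0]|\le \tfrac{p}{2}|I_0|$ — for instance by repeatedly pairing two non-adjacent vertices $x,y$ with large common neighborhood so that $|N[x]\cup N[y]|\le p$, or by running a charging scheme that banks the excess of a high-degree step against the many low-degree vertices that deletion exposes. Making this choice of $I_0$ precise, and verifying the budget $|N[I_0]|\le \tfrac{p}{2}|I_0|$ in full generality from the clique bound alone, is the delicate heart of the argument; the rest is the routine bookkeeping of the reduction above.
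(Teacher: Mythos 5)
Your proposal does not prove the lemma, and the gap is not a routine one. (For context: the paper itself offers no proof of this statement --- it is imported verbatim from \cite{henning2014new} --- so your attempt must stand on its own.) The framework you set up is fine as far as it goes: condition $(A)$ is indeed hereditary under vertex deletion, and your reduction is correct --- if every nonempty induced subgraph satisfying $(A)$ contains a nonempty independent set $I_0$ with $|N[I_0]|\le \frac{p}{2}|I_0|$, then induction on $n$ yields $\alpha(G)\ge \frac{2n}{p}$, and your low-degree case $d(v)\le \lfloor p/2\rfloor -1$ handles one regime. But observe that the existence of such an $I_0$ is essentially \emph{equivalent} to the lemma: a maximum independent set $I$ has $N[I]=V(G)$, so $I$ itself satisfies the budget if and only if $\alpha(G)\ge \frac{2n}{p}$. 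Hence the regime $\delta(G)\ge\lfloor p/2\rfloor$, which you explicitly leave open as ``the delicate heart of the argument,'' is not a verification detail --- it carries the entire content of the theorem, and a proof sketch that defers it proves nothing.

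Worse, the specific route you propose --- establishing the budget ``in full generality from the clique bound alone,'' i.e.\ from $\delta(G)\ge\lfloor p/2\rfloor$ together with $\omega(G)\le\lceil (p-1)/2\rceil$ --- is provably impossible. Take $G=C_5$ and $p=4$: then $\delta(G)=2=\lfloor p/2\rfloor$, $\omega(G)=2=\lceil (p-1)/2\rceil$, and even the singleton-clique degree cap $d(x)\le p-2$ holds, yet no nonempty independent $I_0$ satisfies $|N[I_0]|\le 2|I_0|$ (a single vertex has $|N[v]|=3>2$, and any independent pair has $N[I_0]=V(G)$ with $5>4$); correspondingly $\alpha(C_5)=2<\frac{2n}{p}=\frac{5}{2}$. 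Of course $C_5$ violates $(A)$ for $p=4$ --- on each edge $X$ both endpoints have degree $2=p-|X|$, not $<p-|X|$ --- and that is exactly the point: in the high-minimum-degree regime one must exploit the full quantitative trade-off of $(A)$ on \emph{every} clique (each clique $X$ supplies a vertex with $d(x)\le p-|X|-1$, and already the condition on edges carries information that the clique-number corollary discards). Neither of your two suggested mechanisms is developed enough to check, and the first, as stated, would require two non-adjacent vertices of degree $p-2$ to share at least $p-2$ common neighbors, which nothing in your setup supplies. So what you have is a correct but standard reduction plus an unproven --- and, along the sketched lines, unprovable --- main case; the lemma itself still rests entirely on the argument of \cite{henning2014new}.
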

	
	\noindent\textbf{\textit{Proof of Theorem \ref{packingtheorem}.}}
	Assume, for the sake of contradiction, there exists a cubic graph $G$ with girth 5 and $n \geq 12$, such that $\rho(G) < \frac{n+1}{8}$.
	Now, we present an algorithm for constructing a packing, denoted as $S$, within the graph $G$.
	\begin{algorithm} [h]
		\label{alg3}
		\caption{A construction of $S$}  
		\begin{algorithmic}[1]  
			\State For any vertex $u\in V(G)$, set $S=\{u\},\overline{S}=V(G)\setminus \{u\}$   
			\While{$\overline{S}\neq \emptyset$}
			\State For any $v\in \partial_3(S)$, $S\leftarrow S\cup \{v\}$
			\State $\overline{S}\leftarrow V(G)\setminus N_2[S]$  
			\EndWhile  
			\State \textbf{return} $S$
		\end{algorithmic}  
	\end{algorithm}
	
	
	When the algorithm stops, the resulting set $S$ qualifies as a packing, as the distance between any pair of vertices in $S$ is at least 3. Since $g(G)=5$, for each $w \in V(G)$, every two vertices in $N_2[w]$ are distinct. Consequently, $N_2[w]= 10$. After selecting the initial vertex $u$, 10 vertices are eliminated. Subsequently, with each choice of vertex $v$, we eliminate at most 8 vertices, as at least one neighbor of $v$ and one vertex at a distance of 2 from $v$ have already been removed. Thus, we establish the following two claims.
	\begin{claim}\label{Adjacentclaim1}
		For each vertex $u_1$ of $G$ and $u_2$ of $\partial_3(u_1)$, $|N_2[u_1]\cap N_2[u_2]|\le 4$.
	\end{claim}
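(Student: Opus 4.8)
The plan is to classify the vertices of $N_2[u_1]\cap N_2[u_2]$ by their pair of distances to the endpoints and then squeeze each class using the girth-$5$ hypothesis. Write $d=d_G$. Since $d(u_1,u_2)=3$, every common vertex $w$ satisfies $d(u_1,w)+d(w,u_2)\ge 3$ by the triangle inequality, while $d(u_1,w)\le 2$ and $d(w,u_2)\le 2$ by membership in the two balls; moreover neither distance can be $0$, because $u_1\notin N_2[u_2]$ and $u_2\notin N_2[u_1]$. Hence the distance vector $(d(u_1,w),d(w,u_2))$ lies in $\{(1,2),(2,1),(2,2)\}$, and I would split the intersection as $P\cup Q\cup R$ where $P$ collects the vectors $(1,2)$, $Q$ the vectors $(2,1)$, and $R$ the vectors $(2,2)$. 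Thus $P\subseteq N(u_1)$, $Q\subseteq N(u_2)$, and $R$ is the set of vertices at distance exactly $2$ from both endpoints; the target is $|P|+|Q|+|R|\le 4$.

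Next I would record the two structural consequences of $g(G)=5$ that drive the count: $G$ contains no triangle and no $4$-cycle, so any two vertices have at most one common neighbour and the closed neighbourhood of each vertex induces a star. Using the first fact I would control $P$ and $Q$ through the number of $u_1$–$u_2$ geodesics: a vertex $w\in P$ is the first internal vertex of some length-$3$ path $u_1\,w\,w'\,u_2$, and two such paths sharing $w$ would make $w'$ a second common neighbour of $w$ and $u_2$, which is forbidden; the symmetric argument bounds $Q$. The real difficulty is that this is not enough on its own, because a vertex of $R$ has one neighbour in $N(u_1)$ and one in $N(u_2)$ and therefore sits on a $5$-cycle threaded through the geodesic structure; such vertices can coexist with the geodesic endpoints, so bounding $|P|$, $|Q|$, and $|R|$ separately overshoots the target. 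The crux is therefore to bound $|P|+|Q|+|R|$ jointly, tracing the short cycles that each $R$-vertex forces and showing that these cycles restrict how many geodesics and how many equidistant vertices can be present simultaneously.

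I expect this joint bound to be the main obstacle, and I would organise it as a case analysis on the number of geodesics $g\in\{1,2,3\}$ between $u_1$ and $u_2$. In each case I would list the forced adjacencies among $N(u_1)\cup N(u_2)$ and the candidate $R$-vertices, and invoke the no-$4$-cycle condition to discard every configuration whose count would exceed $4$. The most delicate part will be the bookkeeping that merges the cases into the single bound, since the regime with few geodesics is where $R$ can be largest while the regime with several geodesics is where $R$ must collapse; reconciling these opposing trade-offs so that the sum is uniformly at most $4$ is where I anticipate the girth condition has to be used most carefully, and I would treat the $g=1$ and $g\ge 2$ regimes separately before combining them.
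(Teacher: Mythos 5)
Your proposal treats the claim as a purely local consequence of the girth condition, but it is not one, and this is where the attempt breaks down. In your own decomposition, a neighbour $a$ of $u_1$ lies in $P$ exactly when $a$ is adjacent to some neighbour of $u_2$, and the absence of $4$-cycles forces the edges between $N(u_1)$ and $N(u_2)$ to form a matching of size $m\in\{1,2,3\}$, so $|P|=|Q|=m$. Your case analysis must therefore rule out $m=3$ (which already gives $|P|+|Q|=6$) and also $m=2$ together with a single $(2,2)$-vertex. Neither configuration is excluded by triangle-freeness and $C_4$-freeness alone: in the Heawood graph (cubic, bipartite, no triangles, no $4$-cycles), take $u_1$ a point $p$ of the Fano plane and $u_2$ a line $\ell$ not through $p$; then $d(p,\ell)=3$, every one of the three lines through $p$ meets $\ell$, and the meeting points are distinct, so the cross-edges form a perfect matching and $|N_2[u_1]\cap N_2[u_2]|=6$. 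The Heawood graph has girth $6$ rather than exactly $5$, but nothing in your argument uses the presence of a $5$-cycle, so your toolkit would have to ``prove'' a false statement; the joint bound you identify as the main obstacle is not merely delicate, it is unprovable from the hypotheses you allow yourself.

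The missing ingredient is the global reductio hypothesis under which the claim sits: the whole proof of Theorem 3 assumes, for contradiction, that $\rho(G)<\frac{n+1}{8}$, and Claim 1 is established \emph{from that assumption} by a counting argument, not by local structure. Concretely, if $|N_2[u_1]\cap N_2[u_2']|\ge 5$, run the greedy packing algorithm (Algorithm 3) starting with $u_1$ and then $u_2'$: the first vertex covers $|N_2[u_1]|=10$ vertices, the second covers at most $10-5=5$ new ones, and every later vertex chosen from $\partial_3(S)$ covers at most $8$ new ones, giving $n\le 10+5+8(|S|-2)=8|S|-1$ and hence $\rho(G)\ge\frac{n+1}{8}$, the desired contradiction. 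Any correct proof must invoke this global packing bound (or an equivalent); a case analysis on geodesics between $u_1$ and $u_2$ cannot close the gap.
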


	\begin{proof}
		Suppose that there exists a vertex $u'_2 \in \partial_3(u_1)$ such that $|N_2[u_1]\cap N_2[u'_2]|\ge 5$.
		By Algorithm 3, let $u_1$ be the first selected vertex and $u'_2$ be the second, we obtain that $n\leq |N_2[u_1]|+(|N_2[u_2']\setminus N_2[u_1]|)+8(|S|-2)\leq 10+5+8(|S|-2)$, which implies $\rho(G) \ge \frac{n+1}{8}$, a contradiction.
	\end{proof}
	\begin{claim}\label{Adjacentclaim2}
		Any two distinct cycles of length $5$ in $G$ have at most one common edge.
	\end{claim}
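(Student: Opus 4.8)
The plan is to argue by contradiction. Suppose $C$ and $C'$ are distinct $5$-cycles with $k:=|E(C)\cap E(C')|\ge 2$. The main device is the symmetric difference $D=E(C)\triangle E(C')$: it is an even subgraph of $G$, hence an edge-disjoint union of cycles, and because $g(G)=5$ every cycle of $G$ (so every cycle inside $D$) has length at least $5$, while $|E(D)|=10-2k$. I would first use this count to kill the large values of $k$. If $k=5$ then $C=C'$; if $k=4$ then $|E(D)|=2$ cannot be a union of cycles; and if $k=3$ then $|E(D)|=4<5$ is too small for any nonempty union of girth-$5$ cycles. Hence only $k=2$ survives, and then $|E(D)|=6$ forces $D$ to be a single $6$-cycle $Y$, since two cycles would already require $10$ edges.

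It then remains to see how the two common edges sit relative to $Y$. Each of $C$ and $C'$ is built from its three edges lying on $Y$ together with the two common edges, so the common edges behave as chords that reclose arcs of $Y$ into $5$-cycles. I would split according to whether the two common edges are vertex-disjoint or share a vertex. In the \emph{non-adjacent} case all four endpoints of the two common edges lie on $Y$; writing $C=u_1u_2u_3u_4p$ with common edges $u_1u_2$ and $u_3u_4$ and running through the few admissible pentagons $C'$ that contain both of these edges but differ from $C$, one checks that in every instance some three of the vertices become pairwise adjacent, i.e.\ a triangle appears. As $g(G)=5$, this is the desired contradiction, and I expect this case to be entirely routine once the $6$-cycle structure is fixed.

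The main obstacle is the \emph{adjacent} case, where the two common edges $xy$ and $yz$ meet at a vertex $y$. Here $y$ has degree $0$ in $D$, so it lies off $Y$ and is joined to $x$ and $z$, which turn out to be antipodal on $Y$; attaching a single vertex to two antipodal vertices of a $6$-cycle recreates exactly $C$ and $C'$ and introduces no shorter cycle. Thus the girth hypothesis alone does \emph{not} resolve this case — it is realised, for instance, by two pentagons of the Petersen graph sharing two edges at a common vertex, a graph excluded here only by $n\ge 12$. This is the step I expect to cost the most effort. My plan is to exploit the remaining hypotheses by feeding the configuration into Algorithm~\ref{alg3}: starting the construction at $z$, I would argue that the two $5$-cycles force enough overlap among the sets $N_2[\cdot]$ that a subsequently chosen packing vertex deletes strictly fewer vertices than the generic bound allows (in the spirit of Claim~\ref{Adjacentclaim1}), pushing the final count to $\rho(G)\ge\frac{n+1}{8}$ and contradicting the standing assumption $\rho(G)<\frac{n+1}{8}$.
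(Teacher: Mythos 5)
Your reduction to the case of two adjacent common edges is correct and in fact cleaner than the paper's: the symmetric difference $E(C)\triangle E(C')$ is an even subgraph, so girth $5$ forces $10-2k\ge 5$, i.e.\ $k\le 2$, and for $k=2$ it must be a single $6$-cycle; the paper instead disposes of $k=3$ and of the nonadjacent $k=2$ case by exhibiting a $C_4$ or a $C_3$ directly. Your structural description of the adjacent case is also right (the shared vertex sits off the $6$-cycle, joined to two antipodal vertices of it), as is your diagnosis that the girth hypothesis alone cannot finish this case because the Petersen graph realises the configuration.

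The gap is that the adjacent case is exactly where the work lies, and you offer only a plan, not an argument. Feeding the configuration into the packing-construction algorithm ``starting at $z$'' does not by itself produce a contradiction: the excessive $N_2$-overlap is not between two vertices of the two pentagons (any two vertices of the configuration are within distance $2$ of one another, so Claim \ref{Adjacentclaim1} never applies to such a pair), and the offending distance-$3$ pair has to be manufactured. Writing the cycles as $abcdea$ and $fgcdef$ with shared path $c\,d\,e$, the paper first shows $d(b,f)=2$ and $d(a,g)=2$ (distance $1$ creates a $C_4$; distance $3$ gives $|N_2[b]\cap N_2[f]|\ge 5$, contradicting Claim \ref{Adjacentclaim1}), which yields two new distinct vertices $z_1\in N(b)\cap N(f)$ and $z_2\in N(a)\cap N(g)$ outside the pentagons. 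Only then does the real dichotomy appear: if both $z_i$ are at distance $2$ from $d$, the graph closes up into the Petersen graph, excluded by $n\ge 12$; otherwise some $z_i$ is at distance $3$ from $d$ with $|N_2[d]\cap N_2[z_i]|\ge 6$, the final contradiction with Claim \ref{Adjacentclaim1}. None of these steps --- the forced distance-$2$ relations, the construction of $z_1$ and $z_2$, or the explicit Petersen sub-case --- appears in your sketch, and without them the counting idea has nothing to count.
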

	\begin{proof}
		Suppose there are two distinct cycles of length 5 sharing $k$ common edges. Clearly, $0 \leq k \leq 3$.
		
		Note $g(G)=5$. If $k=3$, then $G$ contains a $C_4$, a contradiction. If $k=2$ and the two edges are nonadjacent, then $G$ contains a $C_3$, a contradiction. Therefore, let $abcdea$ and $fgcdef$ be two cycles of order $5$ that share two adjacent edges, as illustrated in Figure \ref{fig:cubicgirthproof}. We deduce that $d(b,f)=2$. Otherwise, if $d(b,f)=1$, there is a $C_4=bcgfb$, a contradiction; if $d(b,f)=3$, then $|N_2[b]\cap N_2[f]|\ge 5$, contradicting Claim \ref{Adjacentclaim1}.
		
		By symmetry, we may assume that $d(a,g)=2$. Let $z_1=N(b)\cap N(f)$ and $z_2=N(a)\cap N(g)$. Since $\Delta(G)=3$, $z_1\neq z_2$. We next show that $\max \{d(d,z_1),d(d,z_2)\} =3$. If $dz_i\in E(G)$ for some $i\in [2]$, then there is a $C_4$, a contradiction. If $d(d,z_1)=d(d,z_2)=2$, then there is a common neighbor of $d,z_1,z_2$. Thus, $G$ is the Petersen graph, which contradicts that $n\geq 12$. Without loss of generality, let $d(d,z_1)=3$. Then $|N_2[d]\cap N_2[z_1]|\ge 6$, which contradicts Claim \ref{Adjacentclaim1}.
		\begin{figure}
			\centering
			\begin{tikzpicture}[scale=1.5]

			\node[draw,circle,inner sep=1.5pt,fill](A)at (0,2){};
			\node[draw,circle,inner sep=1.5pt,fill](B) at (-1,1.4){};
			
			\node[draw,circle,inner sep=1.5pt,fill](C) at (1,1.4){}; 
			
			\node[draw,circle,inner sep=1.5pt,fill](D) at (-1,.6){};
			
			\node[draw,circle,inner sep=1.5pt,fill](E) at (0,1.4){};
			
			\node[draw,circle,inner sep=1.5pt,fill](F) at (0,.6){};
			
			\node[draw,circle,inner sep=1.5pt,fill](G) at (1,.6){};
			
			\node[draw,circle,inner sep=1.5pt,fill](H) at (-.7,.2){};
			\node[draw,circle,inner sep=1.5pt,fill](I) at (.7,.2){};
			\draw (A)--(B)--(D)--(F)--(E)--(A)--(C)--(G)--(F);
			\draw (D)--(I);
			\draw (G)--(H);
			\draw (B) to(H);
			\draw (I) to(C);
			\node at (0,2.3) {$e$};
			\node at (1.2,1.4) {$f$};
			\node at (-1.2,.6) {$b$};
			\node at (-1.2,1.4) {$a$};
			\node at (1.2,.6) {$g$};
			\node at (.7,0) {$z_1$};
			\node at (-.7,0) {$z_2$};
			\node at (0.2,1.4) {$d$};
			\node at (0.2,.7) {$c$};
			\end{tikzpicture}
			\caption{Proof of Claim \ref{Adjacentclaim2}.}
			\label{fig:cubicgirthproof}
		\end{figure}
	\end{proof}
	We now consider the square, $G^2$, of $G$. Let $x$ be any vertex of $G$.
	Assume that the cardinality of every clique in $G^2$ containing $x$ is at most 5, and let $p=15$. 
	Take an arbitrary clique $X$ in $G^2$ containing the vertex $x$. 
	It is clear that $p-|X|\ge 15-5> 9=d_{G^2}(x)$. 
	Therefore, by Lemma \ref{keylemma} and the Observation \ref{obser}, we deduce that $\rho(G)=\alpha(G^2)\ge \frac{2n}{p}=\frac{2n}{15}$. Furthermore, since $\rho(G)$ is an integer, it implies that $\rho(G) \geq \lceil \frac{2n}{15} \rceil \ge  \frac{n+1}{8}$ for $n\ge 12$, a contradiction. 
	Thus, in the following proof, we are devoted to showing that $|X|\le 5$.
	
	For convenience, let $N(x)=\{x_1,x_2,x_3\}$, $N(x_1)=\{x,y_1,y_2\}$, $N(x_2)=\{x,y_3,y_4\}$ and $N(x_3)=\{x,y_5,y_6\}$. Observe that these vertices are distinct from each other since $g(G)=5$ and $V(X)\subseteq N_2[x]$. In the following proof in this section, we will use these notations without explanation. A straightforward but helpful fact can be easily proved.
	\begin{claim}\label{fact1}
		If $x_i\in V(X)$, then for each $j\in [3]\setminus \{i\}$, $N(x_j)\setminus \{x\}\nsubseteq V(X)$.
	\end{claim}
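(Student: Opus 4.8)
The plan is to argue by contradiction, combining the clique condition in $G^2$ with the girth-$5$ constraint and Claim~\ref{Adjacentclaim2}. By the symmetry among $x_1,x_2,x_3$ it suffices to treat the case $i=1$, $j=2$: suppose $x_1\in V(X)$ while $N(x_2)\setminus\{x\}=\{y_3,y_4\}\subseteq V(X)$, and derive a contradiction. Since $X$ is a clique in $G^2$, every two of its vertices are at distance at most $2$ in $G$; in particular $d_G(x_1,y_3)\le 2$ and $d_G(x_1,y_4)\le 2$.

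First I would pin down where the short $x_1$--$y_3$ path can live. I would rule out $x_1y_3\in E(G)$, since $x_1xx_2y_3$ would then be a $4$-cycle, contradicting $g(G)=5$. Hence $x_1$ and $y_3$ share a common neighbor, which must lie in $N(x_1)=\{x,y_1,y_2\}$; the choice $x$ is impossible because $xx_2y_3$ would be a triangle. Thus $y_3$ is adjacent to exactly one of $y_1,y_2$ (exactly one, since adjacency to both would create the $4$-cycle $y_1x_1y_2y_3$). The same argument applied to $y_4$ shows $y_4$ is adjacent to exactly one of $y_1,y_2$.

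The remaining work is a short case analysis on these two attachments. If $y_3$ and $y_4$ attach to the same vertex $y_a\in\{y_1,y_2\}$, then $y_3y_ay_4x_2$ is a $4$-cycle, contradicting the girth. Otherwise, after relabelling, $y_3$ is adjacent to $y_1$ and $y_4$ to $y_2$; then $xx_1y_1y_3x_2x$ and $xx_1y_2y_4x_2x$ are two $5$-cycles that, by the distinctness of all ten vertices of $N_2[x]$ (guaranteed by $g(G)=5$), are distinct yet share the two edges $xx_1$ and $xx_2$, contradicting Claim~\ref{Adjacentclaim2}.

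I expect the main obstacle to be the bookkeeping in the second case: one must verify that the two exhibited $5$-cycles are genuinely distinct and that they share precisely two (adjacent) edges, so that Claim~\ref{Adjacentclaim2} applies. This rests on the fact that $x,x_1,x_2,x_3,y_1,\dots,y_6$ are pairwise distinct, which the girth-$5$ hypothesis supplies. Once both cases are closed, the contradiction is complete and the claim follows by symmetry for all $i$ and all $j\in[3]\setminus\{i\}$.
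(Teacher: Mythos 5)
Your proposal is correct and follows essentially the same route as the paper: force $d_G(x_1,y_3)=d_G(x_1,y_4)=2$ with common neighbors in $\{y_1,y_2\}$, kill the "same attachment" case with a $4$-cycle, and the "different attachment" case with two $5$-cycles sharing the edges $xx_1$ and $xx_2$, contradicting Claim~\ref{Adjacentclaim2}. The only difference is that you spell out the preliminary exclusions (no edge $x_1y_3$, common neighbor not equal to $x$) that the paper leaves implicit.
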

	\begin{proof}
		Without loss of generality, let $x_1\in V(X)$ and $N(x_2)\subseteq V(X)$, that is $\{y_3,y_4\}\subseteq V(X)$. 
		Therefore $d_G(y_3,x_1)=d_G(y_4,x_1)=2$. We may assume that $y_2$ is a common neighbor of $y_3$ and $x_1$. Then either $y_2$ or $y_1$ would be the common neighbor of $y_4$ and $x_1$. If $y_2y_4\in E(G)$, then $x_2y_3y_2y_4x_2$ forms a $C_4$, which is impossible. If $y_4y_1\in E(G)$, then $xx_1y_1y_4x_2x$ and $xx_1y_2y_3x_2x$ are two cycles of length 5 that share two common edges, which contradicts Claim \ref{Adjacentclaim2}.
	\end{proof}
	\begin{claim}\label{Thm3_claim4}
		The clique number of $G^2$ is at most 5.
	\end{claim}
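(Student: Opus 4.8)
The plan is to bound the size of an arbitrary clique $X$ of $G^2$ that contains the fixed vertex $x$; since the clique number of $G^2$ is the maximum such size taken over all $x$, this suffices. As $x\in X$ and $X$ is a clique in $G^2$, every vertex of $X$ lies in $N_2[x]$, so $X\subseteq\{x\}\cup\{x_1,x_2,x_3\}\cup P_1\cup P_2\cup P_3$, where $P_i=N(x_i)\setminus\{x\}$, that is $P_1=\{y_1,y_2\}$, $P_2=\{y_3,y_4\}$, $P_3=\{y_5,y_6\}$; by $g(G)=5$ these ten vertices are pairwise distinct. Writing $a=|X\cap\{x_1,x_2,x_3\}|$ and $b=|X\cap(P_1\cup P_2\cup P_3)|$, the goal is to show $a+b\le 4$, so that $|X|=1+a+b\le 5$. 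I would run a case analysis on $a$, feeding it with two structural observations.

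The first observation, call it $(\ast)$, is that every $v\in P_i$ has at most one neighbor in $\bigcup_{j\neq i}P_j$. Indeed, each neighbor $w\in P_j$ ($j\neq i$) of $v$ produces a $5$-cycle $x\,x_i\,v\,w\,x_j\,x$, and two such cycles would share the $2$-edge path $x\,x_i\,v$, contradicting Claim \ref{Adjacentclaim2}. Combined with girth $5$ (which forces $v\not\sim x_j$ for $j\neq i$, else a $C_4$), $(\ast)$ says precisely that for $v\in X\cap P_i$ to lie within distance $2$ of some $x_j\in X$, $v$ must spend its \emph{unique} cross-neighbor on a vertex of $P_j$. The second observation, call it Lemma~B, is that no two pairs $P_i,P_j$ can be wholly contained in $X$: if both were, the cross-edges between them would be limited to at most one by Claim \ref{Adjacentclaim2}; the single-cross-edge case yields a common-neighbor $5$-cycle sharing two edges with $x\,x_i\,y\,y'\,x_j\,x$, and the no-cross-edge case forces the common neighbors realizing all four cross-distances to give $|N_2[x_i]\cap N_2[y]|\ge 6$ for a suitable $y\in\partial_3(x_i)$, contradicting Claim \ref{Adjacentclaim1}.

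With these in hand the cases are quick. If $a=3$, then by $(\ast)$ any $y\in X$ would need two cross-neighbors (one toward each of the other two $x_j$), so $b=0$. If $a=2$, say $x_1,x_2\in X$, then by $(\ast)$ no member of $P_3$ can lie in $X$, and Claim \ref{fact1} caps each of $P_1,P_2$ at one vertex, so $b\le 2$. If $a=0$, Lemma~B permits at most one full pair, giving $b\le 2+1+1=4$. The remaining case $a=1$ is where $b\le 3$ must be argued: with $x_1\in X$, Claim \ref{fact1} caps $P_2$ and $P_3$ at one vertex each, so $b=4$ would require $P_1$ full together with single vertices $y_3\in P_2$ and $y_5\in P_3$; using $(\ast)$, each of $y_3,y_5$ must use its lone cross-neighbor to reach $x_1$ through $P_1$, which forbids $y_3\sim y_5$ and pins down the complete neighborhoods $N(y_3)$ and $N(y_5)$, from which a short check shows they share no neighbor, so $d(y_3,y_5)\ge 3$ — contradiction. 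Hence $b\le 3$ when $a=1$, and in all cases $|X|\le 5$.

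I expect the case $a\le 1$ to be the main obstacle. When few of the $x_i$ lie in $X$, the required distance-$2$ relations among the $y$-vertices need not be realized by edges, so one is forced to track the auxiliary common neighbors; the delicate point is to show that these cannot be arranged consistently, which is exactly where the interplay of Claim \ref{Adjacentclaim2} (two $5$-cycles share at most one edge) and Claim \ref{Adjacentclaim1} (bounded $|N_2[u]\cap N_2[u']|$ at distance $3$) does the real work. Everything else reduces to $(\ast)$ and Claim \ref{fact1}.
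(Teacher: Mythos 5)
Your overall strategy --- bounding an arbitrary clique $X\ni x$ of $G^2$ by cases on $a=|X\cap N(x)|$, using Claims \ref{fact1}, \ref{Adjacentclaim1} and \ref{Adjacentclaim2} --- is the same as the paper's (its Cases 1--4 are your $a=0,1,2,3$), and your arguments for $a=2,3$ coincide with the paper's. The one genuine departure is the case $a=0$: the paper places five vertices of $Y$ in $X$, shows the lone representative of $P_3$ has no neighbour in $Y$, and extracts a distance-$3$ vertex violating Claim \ref{Adjacentclaim1}, whereas your Lemma~B (``no two pairs $P_i,P_j$ are both contained in $X$'') caps $b$ at $4$ outright, which is cleaner. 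Lemma~B is provable as you sketch it: the no-cross-edge subcase closes by applying Claim \ref{Adjacentclaim1} to the pair $(x_1,y_3)$, which is at distance $3$ while $N_2[x_1]\cap N_2[y_3]$ contains the six vertices $x,y_1,y_2,x_2$ and the two common neighbours of $\{y_1,y_3\}$ and $\{y_2,y_3\}$.

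The case $a=1$, however, has a gap as written. After forcing (say) $y_3y_2,\,y_5y_1\in E(G)$, you assert that $N(y_3)$ and $N(y_5)$ are ``pinned down'' and that ``a short check shows they share no neighbor, so $d(y_3,y_5)\ge 3$.'' They are not pinned down: each of $y_3,y_5$ retains a third, unconstrained neighbour, and nothing said so far prevents these from coinciding in a vertex $z\notin N_2[x]$, which would give $d(y_3,y_5)=2$ and keep the clique alive. The configuration is only killed by noting that such a $z$ has $d(x,z)=3$ while $N_2[z]\cap N_2[x]\supseteq\{y_3,y_5,x_2,x_3,y_1,y_2\}$, contradicting Claim \ref{Adjacentclaim1} --- which is exactly how the paper finishes its Case 2. (The subcase where $y_3$ and $y_5$ attach to the same vertex of $P_1$ is fine; your observation $(\ast)$ applied to that vertex already excludes it.) So the missing ingredient is one further application of Claim \ref{Adjacentclaim1}, to the pair $(x,z)$ with $z$ an \emph{external} common neighbour, not a check on the named vertices; your closing remark shows you anticipate this, but the argument in the $a=1$ paragraph does not carry it out.
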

	
	\begin{proof}
		Assume that $X$ is a clique of order 6 in $G^2$ containing $x$, where $x$ is an arbitrary vertex of $V(G)$. Let $Y=\{y_i: i\in[6]\}$. We consider four cases based on the number of vertices of $N(x)\cap V(X)$.
		
		\textbf{Case 1.} $N(x)\cap V(X)=\emptyset$. 
		
		Obviously, $|V(X)\cap Y| = 5$, without loss of generality, we may assume that $\{y_i: i\in[5]\}\subseteq V(X)$.
		Because $X$ is a clique in $G^2$, the distance between $y_5$ and any other vertex $y_l$ is at most 2 for $l \in [4]$. 
		
		Now, we prove that $y_5$ has no neighbor in $Y$. Clearly, $y_5y_6\notin E(G)$ as $g(G)\geq 5$. Suppose that $y_i\in Y\setminus \{y_5,y_6\}$ is a neighbor of $y_5$ in $G$.
		Without loss of generality, let $y_4y_5\in E(G)$. Then by $g(G)=5$ and Claim \ref{Adjacentclaim2}, $y_4$ has no neighbor in $\{y_1,y_2,y_3\}$. As $\Delta (G)=3$, $x_3$ also has no neighbor in  $\{y_1,y_2,y_3\}$. Next, we consider the third neighbor $z$ of $y_5$ that is distinct from $\{y_4,x_3\}$. If $z \in \{y_1,y_2,y_3\}$, then $z$ is adjacent to each vertex in $\{y_1,y_2,y_3\} \setminus \{z\}$ since $d_G(y_5,y_i) \le 2$ for each $i \in [3]$.
		This leads to $d_G(z) \ge 4$, which is a contradiction. Therefore, $z \notin Y$, which means $d_G(y_5,y_i) = 2$ for each $i \in [3]$. Consequently, $z$ would be the common neighbor of $y_i$ and $y_5$ for each $i \in [3]$. This implies $d_G(z) \ge 4$, which is not possible. 
		
		Hence, we can deduce that the distance between $y_5$ and $y_l$ is exactly 2 for each $l \in [4]$.
		Then the two neighbors of $y_5$ excluding $x_3$, say $y_{51}$ and $y_{52}$, satisfy the condition that $N(y_{51})\cup N(y_{52})=\{y_i: i\in[5]\}$. 
		In this case, we can deduce that $|N_2[y_{51}]\cap N_2[x]|\ge 6$, contradicting Claim \ref{Adjacentclaim1}. 
		
		\textbf{Case 2.}  $|N(x)\cap V(X)|=1$.
		
		By symmetry, assume $x_1\in V(X)$, then $|V(X)\cap Y|=4$. According to Claim \ref{fact1} and $|X|=6$, we have $|\{y_3,y_4\}\cap V(X)|=1$, $|\{y_5,y_6\}\cap V(X)|=1$ and $\{y_1,y_2\}\subseteq V(X)$. Without loss of generality, we may assume $\{y_3,y_5\}\subseteq V(X)$. 
		Since $d(y_3,x_1)=d(y_5,x_1)=2$, $y_j$ has a neighbor in $\{y_1,y_2\}$ for each $j\in \{3,5\}$. 
		If $y_3$ and $y_5$ have a common neighbor, say $y_c$, in $\{y_1,y_2\}$, then $xx_1y_cy_3x_2x$ and $xx_1y_cy_5x_3x$ are two cycles of length 5, which contradicts Claim \ref{Adjacentclaim2}.
		By symmetry, we can assume $y_3y_2\in E(G)$, then $y_5y_1\in E(G)$. 
		It follows from Claim \ref{Adjacentclaim2} that $y_5$ is not adjacent to any vertex in $\{y_2,y_3,y_4\}$. Furthermore, since $g(G)=5$, $y_2$ and $y_5$ do not have a common neighbor in $Y$.
		Therefore, $y_2$ and $y_5$ have a common neighbor, say $z$, outside $N_2[x]$ since $d(y_2,y_5)=2$.
		Hence $d(x,z)=3$ and $|N_2[x]\cap N_2[z]|\ge 6$, contradicting Claim \ref{Adjacentclaim1}.
		
		\textbf{Case 3.} $|N(x)\cap V(X)|=2$.
		
		We may assume $\{x_1,x_2\}\subseteq V(X)$. 
		By Claim \ref{fact1} and $|X|=6$, we have $|V(X)\cap N(x_i)|=1$ for each $i\in [3]$. 
		Assume $y_k\in V(X)\cap N(x_3)$, since $d(x_1,y_k)=d(x_2,y_k)=2$, it follows that $y_k$ has a neighbor in $\{y_1,y_2\}$ as well as a neighbor in $\{y_3,y_4\}$. 
		Consequently, two cycles of length 5 share two common edges, $xx_3$ and $x_3y_k$, which contradicts Claim \ref{Adjacentclaim2}.
		
		\textbf{Case 4.} $|N(x)\cap V(X)|=3$.
		
		Note that there exists a vertex $y_i\in V(X)$, where $i\in [6]$, without loss of generality, let it be $y_1$. Then $d(y_1,x_2)=d(y_1,x_3)=2$, which implies that $y_1$ has a neighbor in $\{y_3,y_4\}$ as well as a neighbor in $\{y_5,y_6\}$. 
		As a result, we can find two cycles of length 5 that share two edges, $xx_1$ and $x_1y_1$, which is a contradiction with Claim \ref{Adjacentclaim2}.
		
		Due to $x$ is arbitrary, we can establish the claim.
	\end{proof} 
	By the claims established above, we can deduce the desired result.
	\qed
	
	\section{Proof of Theorem \ref{th1}}
	
	Before proving Theorem \ref{th1}, we first introduce a lemma proposed by Henning et al., as shown below. By combining this lemma with three Propositions and Theorem \ref{packingtheorem}, we can complete the proof of Theorem \ref{th1}.
	
	\begin{lemma}{\upshape\cite{dorbec2015independent}}\label{Henningconj}
		If $G\neq C_5\Box K_2$ is a connected cubic graph of order $n$ that doesn't have a subgraph isomorphic to $K_{2,3}$ , then $i(G)\le \frac{3n}{8}$.  
	\end{lemma}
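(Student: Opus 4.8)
The plan is to prove the bound by an amortized counting (discharging) argument applied to a minimum independent dominating set, using the $K_{2,3}$-free hypothesis to control how badly closed neighbourhoods can overlap, and isolating $C_5\Box K_2$ as the single configuration in which the argument is tight and must be excluded. Fix a minimum independent dominating set $I$, so that $i(G)=|I|$, and reformulate the target inequality $i(G)\le\frac{3n}{8}$ as $8|I|\le 3n$. Assign to every vertex an initial charge of $1$, for a total of $n$. Since $I$ is independent and dominating, each $u\in V(G)\setminus I$ has $d_I(u):=|N(u)\cap I|\ge 1$; I would let each such $u$ send charge $\frac{1}{d_I(u)}$ to each of its neighbours in $I$. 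After discharging, a vertex $v\in I$ carries charge $1+\sum_{u\in N(v)}\frac{1}{d_I(u)}$ (all three neighbours of $v$ lie outside $I$), and the inequality $8|I|\le 3n$ follows once every $v\in I$ is shown to end with charge at least $\frac{8}{3}$, i.e. once $\sum_{u\in N(v)}\frac{1}{d_I(u)}\ge\frac{5}{3}$ is established for each $v\in I$.

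The core of the work is this local lower bound, and it is where the structural hypotheses enter. A neighbour $u$ of $v$ with $d_I(u)=1$ already contributes a full unit, so the dangerous vertices $v\in I$ are those whose three neighbours are each dominated by two or three members of $I$. I would invoke $K_{2,3}$-freeness in the form ``no two vertices of $G$, and in particular no two members of $I$, have three common neighbours,'' which forbids the densest overlaps, and combine it with an analysis of the $C_4$'s through $v$ to cap the number of heavily shared neighbours. Each neighbour $u$ of $v$ with $d_I(u)\ge 2$ forces a specific short-cycle configuration; I would classify these and show that any genuinely deficient $v$ (one receiving charge below $\frac{5}{3}$) is pinned to a rigid local structure.

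The main obstacle, and the step I expect to be hardest, is that plain discharging does not by itself deliver $\frac{5}{3}$: three neighbours each with $d_I(u)=3$ contribute only $1$. To close this gap I would use two devices in tandem. First, a second round of discharging in which \emph{rich} members of $I$ (those carrying strictly more than $\frac{8}{3}$) export their surplus along the short paths created by the $C_4$-structure to nearby deficient members, with $K_{2,3}$-freeness guaranteeing that each deficient vertex is adjacent, through controlled $C_4$'s, to enough surplus. Second, an exchange argument exploiting the minimality of $I$: choosing $I$ to minimise, among all minimum independent dominating sets, the number of deficient vertices, I would show that any deficient configuration which cannot be fed admits a local swap producing an independent dominating set of the same size but with strictly fewer deficient vertices, a contradiction.

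Carrying out this bookkeeping is where the real effort lies, since the surplus-transfer and the exchange must be made compatible and shown not to cascade. The graph $C_5\Box K_2$ is precisely the connected cubic $K_{2,3}$-free graph in which every surplus-transfer and every exchange fails simultaneously, so it must be set aside; the crux of the proof is then verifying that it is the \emph{only} such obstruction, i.e. that in every other connected cubic $K_{2,3}$-free graph no deficiency survives, which yields $\sum_{u\in N(v)}\frac{1}{d_I(u)}\ge\frac{5}{3}$ for all $v\in I$ and hence $8|I|\le 3n$, as desired.
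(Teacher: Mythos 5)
This lemma is quoted by the paper from Dorbec, Henning, Montassier and Southey \cite{dorbec2015independent}; the paper contains no proof of it, so your proposal must be judged on its own. As it stands it has a genuine gap, and you have in fact located it yourself: the local claim that every $v\in I$ ends with charge at least $\frac{8}{3}$, i.e.\ $\sum_{u\in N(v)}1/d_I(u)\ge\frac{5}{3}$, is simply false pointwise. A vertex $v\in I$ whose three neighbours each have a second $I$-neighbour (three \emph{distinct} second neighbours, so no $K_{2,3}$ is created) receives only $\frac{3}{2}$, and nothing in cubicity or $K_{2,3}$-freeness excludes this configuration; it occurs in graphs comfortably satisfying the bound. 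So the entire burden of the proof falls on your two rescue devices --- the second round of surplus discharging and the exchange argument minimising the number of deficient vertices --- and neither is carried out. Worse, in extremal graphs with $i(G)=\frac{3n}{8}$ the average final charge on $I$ is exactly $\frac{8}{3}$, so there is no global surplus to export: your transfer scheme would have to match deficiencies to surpluses exactly, with no slack, and you give no mechanism (no transfer paths, no bound on how far surplus must travel, no argument that transfers do not cascade or collide) and no proof of the asserted uniqueness of $C_5\Box K_2$ as the sole surviving obstruction. These are not bookkeeping details; they \emph{are} the theorem.

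For comparison, the actual proof in \cite{dorbec2015independent} does not fix a minimum independent dominating set and discharge over it. It proceeds by induction on the order, proving a strengthened weighted statement (assigning larger weights to vertices of degree $1$ and $2$, which arise when vertices are deleted during the induction) together with a long structural case analysis of how small cycles and $K_{2,3}$-free configurations interact; the exceptional graph $C_5\Box K_2$ emerges as the unique base case where the weighted inequality fails. The global, inductive formulation is precisely what circumvents the obstacle that defeats your pointwise charge bound: the inequality $i(G)\le\frac{3n}{8}$ is an averaged statement, and no known argument establishes it by purely local accounting around each vertex of a fixed independent dominating set. If you wish to pursue your plan, the honest assessment is that making the surplus-transfer and exchange steps rigorous would amount to redoing the hard part of the cited paper in a different and likely less tractable framework.
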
 
	\noindent\textbf{\textit{Proof of Theorem \ref{th1}.}}
	If $G \in \{H_1,H_2,H_3,H_4\}$, then $i(G)=3$ and $\rho(G)=1$, which shows the sufficiency of the Theorem. 
	
	Now, let us prove the necessity. By Proposition \ref{cubic}, we may assume that $G$ is a connected cubic graph with $i(G)=3\rho(G)$. In the next claim, we show that $g(G)\le 5$, and then obtain the extremal graphs by discussing the girth in different cases.
	\begin{claim}
		The girth of $G$ is at most 5. 
	\end{claim}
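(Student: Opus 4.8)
The plan is to argue by contradiction: assume $g(G)\ge 6$ and show that $\rho(G)$ gets pinned to the single value $n/8$ from both sides, and then that this boundary case cannot actually occur when the girth is at least $6$. So the skeleton is an upper bound $\rho(G)\le n/8$, a matching lower bound $\rho(G)\ge n/8$, and the elimination of the resulting equality case.

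First I would cash in the high girth. Since $g(G)\ge 6$, the graph $G$ has no $4$-cycle, hence no subgraph isomorphic to $K_{2,3}$ (a $K_{2,3}$ contains a $C_4$); moreover $G\neq C_5\Box K_2$, because the prism $C_5\Box K_2$ has girth $4$. As $G$ is connected and cubic by Proposition \ref{cubic}, Lemma \ref{Henningconj} applies and gives $i(G)\le \tfrac{3n}{8}$. Combining this with the standing hypothesis $i(G)=3\rho(G)$ yields the upper bound $\rho(G)\le \tfrac{n}{8}$.

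Next I would produce the matching lower bound. Because $g(G)\ge 6$, the graph $G$ is not the Petersen graph (which has girth $5$), so Favaron's Lemma \ref{Favathm} (equivalently Lemma \ref{regular} with $k=3$) gives $\rho(G)\ge \tfrac{n}{8}$. Together with the previous paragraph this forces $\rho(G)=\tfrac{n}{8}$; in particular $8\mid n$ and equality holds simultaneously in Lemma \ref{Henningconj} and in the packing bound. All of the difficulty is now concentrated in showing that this exact equality is incompatible with $g(G)\ge 6$.

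The main obstacle is precisely this final step. Here I would \emph{not} try to reuse the clique-number argument of Theorem \ref{packingtheorem} verbatim: that argument rests on $\omega(G^2)\le 5$, and for girth $\ge 6$ this can genuinely fail (already a bipartite cubic fragment of girth $6$ can carry a $6$-clique in its square), so it only recovers $\rho(G)\ge n/8$, not the strict inequality I need. Instead I would exploit the rigidity of the extremal configuration. Fix a maximal packing $S$ and, via Lemma \ref{pro2}, a set $A$ with $S^A_3=\emptyset$; by Proposition \ref{cubic} this $S$ is maximum with $|N(S)|=3\rho(G)$, and by Propositions \ref{H} and \ref{prop3} the graph $G[N(S)]$ is an induced matching between $A$ and $Z^{*}$ together with isolated vertices, where $N(S)=A\cup Z^{*}$, while every $z\in Z$ has exactly one neighbour in $Z^{*}$ and two in $\overline{Z}$, and every vertex of $A$ has a neighbour in $\overline{Z}$. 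Tracking these forced adjacencies around a single packing vertex $s$ and its three neighbours, the equality $\rho(G)=n/8$ makes the incidences between $S$, $A$, $Z^{*}$, $Z$ and $\overline{Z}$ so tight that one should be driven to close up a path of length at most $5$ into a cycle of length at most $5$, contradicting $g(G)\ge 6$. I expect isolating this short cycle cleanly—equivalently, ruling out the Favaron/Henning extremal case $\rho(G)=n/8$ within the girth-$\ge 6$ class—to be the crux of the whole claim; once it is established, $g(G)\le 5$ follows, and the remaining analysis splits into the cases $g(G)\in\{3,4,5\}$.
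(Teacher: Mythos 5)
Your reduction is carried out correctly as far as it goes: with $g(G)\ge 6$ the graph has no $K_{2,3}$ and is not $C_5\Box K_2$, so Lemma \ref{Henningconj} gives $i(G)\le\frac{3n}{8}$, hence $\rho(G)\le\frac{n}{8}$, while Lemma \ref{Favathm} gives $\rho(G)\ge\frac{n}{8}$, pinning $\rho(G)=\frac{n}{8}$ with $8\mid n$. But the proof stops exactly where you yourself say the crux lies. The elimination of the equality case is never actually performed: ``tracking these forced adjacencies \ldots one should be driven to close up a path of length at most $5$ into a cycle of length at most $5$'' is a hope, not an argument. Nothing in Propositions \ref{H} and \ref{prop3} obviously produces a short cycle from the equality $\rho(G)=\frac{n}{8}$; cubic graphs of girth $6$ with $8\mid n$ exist (e.g.\ on $16$ or $24$ vertices), and for each of them you would have to show either that $\rho>\frac{n}{8}$ or that $i<3\rho$. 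The paper has no analogue of Theorem \ref{packingtheorem} for girth $\ge 6$, so you cannot borrow a strict bound from there. As written, the claim is not proved.

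For comparison, the paper's route is more elementary and avoids the extremal-case analysis entirely. If $g(G)\ge 7$, the three outer neighbours $v_1,v_2,v_3$ of a path $u_1u_2u_3$ are pairwise at distance at least $3$, so they extend to a maximal packing $S$, and then $u_1u_2u_3$ is a $P_3$ inside $G[N(S)]$, contradicting Proposition \ref{H} directly. If $g(G)=6$, repeatedly choosing packings that would force a $P_3$ in $G[N(S)]$ pins down all the adjacencies around a $6$-cycle until $G$ must be the Heawood graph, which has $i(G)=4$ and $\rho(G)=2$, violating $i(G)=3\rho(G)$. So the counting bounds you invoke are not needed here at all; the structural constraint of Proposition \ref{H} (that $G[N(S)]$ has maximum degree at most $1$ for \emph{every} maximal packing $S$) already does the work. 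If you want to rescue your approach, you would need to prove a girth-$\ge 6$ analogue of Theorem \ref{packingtheorem} (a strict lower bound $\rho(G)>\frac{n}{8}$), which is a separate and nontrivial task.
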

	
	\begin{proof}
		We proof by contradiction, assume that $g(G) \geq 6$. If $g(G) \geq 7$, consider a path $u_1 u_2 u_3$ of order 3 and let $v_i$ be a neighbor of $u_i$ in $V(G) \setminus \{u_1, u_2, u_3\}$ for $1 \leq i \leq 3$. Note that $d(v_i, v_j) \geq 3$ for any $1 \leq i < j \leq 3$. If we choose $S$ as a maximal packing containing $\{v_1, v_2, v_3\}$ then the induced subgraph $G[N(S)]$ contains a path of order at least 3, contradicting Proposition \ref{H}. 
		
		Now, we consider the case where $g(G) = 6$. Let $C = u_1 u_2 \ldots u_6 u_1$ be a cycle of length 6 in $G$. For $1 \leq i \leq 6$, let $v_i$ denote the neighbor of $u_i$ outside of $C$. By the girth condition, all these vertices are distinct. If $\{v_1, v_2, u_4\}$ is a packing, then choose $S$ as a maximal packing containing $\{v_1, v_2, u_4\}$, the induced subgraph $G[N(S)]$ contains a path of order at least 3, which contradicts Proposition \ref{H}. Therefore, $\{v_1, v_2, u_4\}$ cannot form a packing of $G$. Since $g(G) = 6$, $\{v_1, v_2, u_4\}$ must be an independent set in $G$. This means that there exist two vertices in $\{v_1, v_2, u_4\}$ that have a common neighbor, indicating that $v_1 v_4 \in E(G)$. By symmetry, we obtain $v_2 v_5 \in E(G)$ and $v_3 v_6 \in E(G)$.
		
		If $S_0 = \{v_1, v_2, v_3\}$ forms a packing, let $S$ be a maximal packing containing $S_0$. Then the induced subgraph $G[N(S)]$ contains a path of order at least 3, contradicting Proposition \ref{H}. Therefore, by the girth condition, $v_1$ and $v_3$ must have a common neighbor, say $w_1$, where $w_1 \notin N[V(C)]$. By symmetry, $v_3$ and $v_5$ have a common neighbor, which must be $w_1$ due to $\Delta(G) \leq 3$. By symmetry, there exists a vertex $w_2$ that is adjacent to each vertex of $\{v_2,v_4,v_6\}$. Then $G$ is the Heawood graph (as illustrated in Figure \ref{TheHeawoodgraph}), with $i(G) = 4$ and $\rho(G) = 2$, which is a contradiction.
		
		Therefore, we get the desired result.

		\begin{figure}[h]
			\centering
			\begin{tikzpicture}
			\coordinate (A) at (0,0);
			\foreach \x in {1,...,14}
			{
				\coordinate (P\x) at (\x*180/7:2cm);
				\node[draw,circle,inner sep=1.2pt,fill] at (P\x){};
			}
			
			\draw (P1)node[above,right]{$u_4$}--(P2)node[above,right]{$u_3$};
			\draw (P3)node[above]{$u_2$}--(P4)node[above]{$u_1$};
			\draw (P5)node[above, left]{$v_1$}--(P6)node[above, left]{$v_4$};
			\draw (P7)node[left]{$w_2$}--(P8)node[below,left]{$v_2$};
			\draw (P9)node[below,left]{$v_5$}--(P10)node[below]{$w_1$};
			\draw (P11)node[below]{$v_3$}--(P12)node[below,right]{$v_6$};
			\draw (P13)node[below,right]{$u_6$}--(P14)node[right]{$u_5$};
			\draw (P1)--(P6);
			\draw (P4)--(P13);
			\draw (P5)--(P10);
			\draw (P7)--(P12);
			\draw (P9)--(P14);
			\draw (P11)--(P2);
			\draw (P3)--(P8);
			\foreach \x/\y in {1/2,2/3,3/4,4/5,5/6,6/7,7/8,8/9,9/10,10/11,11/12,12/13,13/14,14/1}
			{
				
				{	\draw (P\x)--(P\y);}
			}
			
			\end{tikzpicture}
			\caption{The Heawood graph.}
			\label{TheHeawoodgraph}
		\end{figure}
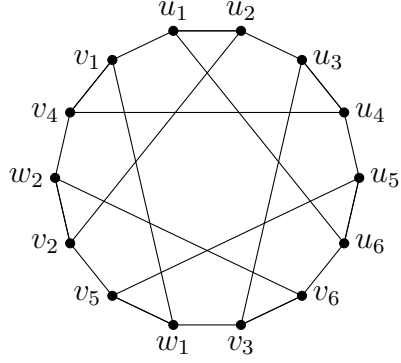

	\end{proof}
	Next, we discuss the cases where the girth of $G$ is 3, 4, and 5, respectively.
	\begin{case}
		If $G$ contains a triangle, then $G\cong H_1$.
	\end{case}
	\begin{proof}
		Assume that $G$ contains a triangle $T = u_1u_2u_3u_1$. If $u_1$ and $u_2$ have a common neighbor $u_3'$ distinct from $u_3$, then by choosing $S$ as a maximal packing with $u_1 \in S$, the path $u_3u_2u_3'$ would be a $P_3$ in $G[N(S)]$, contradicting Proposition \ref{H}. Therefore, no two vertices of $T$ have a common neighbor outside of $T$. Let $v_i$ represent the neighbor of $u_i$ outside of $T$. If $S_0=\{v_1, v_2, v_3\}$ forms a packing, we can choose $S$ as a maximal packing with $S_0 \subseteq S$. However, this leads to $T\subseteq G[N(S)]$, contradicting Proposition \ref{H}. Therefore, there exist two vertices in $\{v_1, v_2, v_3\}$ whose distance is at most 2. Without loss of generality, we can assume that $d(v_1, v_2) \leq 2$.
		
		Now, suppose that $d(v_1,v_2)=1$. If both $v_1v_3$ and $v_2v_3$ are edges of $G$, then the resulting graph has $i(G)=2$ and $\rho(G)=1$, which is a contradiction. Therefore, there exists at least one vertex in $\{v_1,v_2\}$ that is not adjacent to $v_3$. This implies that there exists one neighbor of $v_3$, say $x$, satisfying $\max\{d(u_1,x),d(u_2,x)\} \geq 3$. If $d(u_1,x)\geq 3$ (resp. $d(u_2,x)\geq 3$), then $\{u_1,x\}$ (resp. $\{u_2,x\}$) is a packing of $G$. If we choose $S$ to be a maximal packing containing $\{u_1,x\}$ (resp. $\{u_2,x\}$), then the path $u_{2}u_3v_3$ (resp. $u_{1}u_3v_3$) would be a $P_3$ in $G[N(S)]$, contradicting Proposition \ref{H}.

		Therefore, $d(v_1, v_2) = 2$. We claim that $v_3$ cannot be a common neighbor of $v_1$ and $v_2$. Otherwise, there would exist a neighbor $y$ of $v_1$ such that $y \notin N[V(T)]$, thus, $d(u_3, y) \geq 3$. If we choose $S$ as a maximal packing containing $\{u_3, y\}$, then $u_2u_1v_1v_3$ is a path of order $4$ in $G[N(S)]$, contradicting Proposition \ref{H}. Assume that $v_4$ is a common neighbor of $v_1$ and $v_2$, where $v_4 \notin N[V(T)]$. Then, $v_3v_4 \in E(G)$. If not, choosing $S$ as a maximal packing containing $\{u_3, v_4\}$ as $d(u_3,v_4)\geq 3$, the path $v_1u_1u_2v_2$ would be a $P_4$ in $G[N(S)]$, which contradicts Proposition \ref{H}. We now assert that $v_2v_3 \notin E(G)$. Otherwise, $v_1$ has a neighbor, say $z$, with $d(v_2, z) \geq 3$. In this case, if we choose $S$ as a maximal packing containing $\{v_2, z\}$, the path $v_1v_4v_3$ would be a $P_3$ in $G[N(S)]$, contradicting Proposition \ref{H}. Therefore, $v_2$ has a neighbor $v_5$ such that $v_5 \notin N[V(T)] \cup \{v_4\}$. We claim that $v_3v_5 \in E(G)$. Otherwise, choosing $S$ as a maximal packing with $u_3, v_5$ contained, the path $u_1u_2v_2$ would be a $P_3$ in $G[N(S)]$, contradicting Proposition \ref{H}. Finally, we have $v_1v_5 \in E(G)$. Otherwise, if we choose $S$ as a maximal packing containing $\{u_1, v_5\}$, the path $v_2u_2u_3v_3$ would be a $P_4$ in $G[N(S)]$, contradicting Proposition \ref{H}. Hence, $G \cong H_1$.
	\end{proof}
	
	\begin{case}
		If the girth of $G$ is $4$, then $G\cong H_2$ or $G\cong H_3$.
	\end{case}
	\begin{proof}
		{\bf Subcase 1.} $G$ contains $K_{2,3}$ as a subgraph.
		
		Let $G$ be a graph containing a $K_{2,3}$ with partite sets $\{u_1,u_2\}$ and $\{v_1,v_2,v_3\}$. Since $G$ contains no triangle, every copy of $K_{2,3}$ in $G$ is induced. For each $1\leq i\leq 3$, let $w_i$ be the neighbor of $v_i$ outside $\{u_1,u_2\}$.
		
		First, if $w_1=w_2=w_3$, then $G\cong K_{3,3}$, namely, $G\cong H_2$.
		
		Next, suppose that $w_1=w_2\neq w_3$. In this case, $w_1w_3\in E(G)$. If not, $d(v_1,w_3)\geq 3$. By choosing $S$ as a maximal packing containing $v_1$ and $w_3$, we observe that $u_1v_3u_2$ forms a path of order $3$ in $G[N(S)]$, which contradicts Proposition \ref{H}. Let $x$ represent the neighbor of $w_3$ distinct from $v_3$ and $w_1$. Now $d_G(v_2,x)=3$. By choosing $S$ as a maximal packing containing $\{v_2,x\}$, we deduce that $w_1w_3$ is an edge in $G[N(S)]$. By Proposition \ref{prop3}, if $w_1\in A$ and $w_3\in Z^*$, then $v_3\in Z$ and $u_1,u_2\in \overline{Z}$, which is a contradiction with $\{u_1,u_2\}\subseteq N(S)$. Therefore, $w_1\in Z^*$ and $w_3\in A$. Thus, $v_1\in Z$, and $v_1$ has two neighbors, namely $u_1$ and $u_2$, in $\overline{Z}$ by Proposition \ref{prop3}. This contradicts $\{u_1,u_2\}\subseteq N(S)$.
		
		Finally, suppose that $w_1\neq w_2\neq w_3$. Since $G$ contains no triangle, by symmetry, we may assume that $w_1w_2\notin E(G)$. Now, $d_G(w_1,v_2)=3$. By selecting $S$ as a maximal packing containing $w_1$ and $v_2$, we obtain that $u_1v_1u_2$ is a path of order $3$ in $G[N(S)]$, which contradicts Proposition \ref{H}.

		{\bf Subcase 2.} $G$ dose not contain $K_{2,3}$ as a subgraph.
		
		Suppose that $G$ contains a cycle $C=u_1u_2u_3u_4u_1$ of length 4. Let $v_i$ denote the neighbor of $u_i$ outside $C$ for each $1\leq i\leq 4$, and let $I=\{v_1,v_2,v_3,v_4\}$.
		Since $G$ contains neither a $K_3$ nor a $K_{2,3}$, the vertices in $I$ are pairwise distinct. 
		If $d(v_1,u_3)\geq 3$, then consider a maximal packing $S$ containing $\{v_1,u_3\}$. Note that $u_2u_1u_4$ forms a path of order 3 in $G[N(S)]$, which contradicts Proposition \ref{H}. Therefore, $v_1v_3\in E(G)$. By symmetry, $v_2v_4\in E(G)$.
		
		If $d(v_1,v_2)=3$, then choosing a maximal packing $S$ containing $\{v_1,v_2\}$, so $u_1u_2$ is an edge in $G[N(S)]$.
		By symmetry, we only consider the case that $u_1\in A$ and $u_2\in Z^*$. Therefore, it follows from Proposition \ref{prop3} that $u_3\in Z$, and $u_3$ has two neighbors, namely $v_3$ and $u_4$, in $\overline{Z}$. This contradicts that $v_3\in N(S)$. Hence, by symmetry, the distance of any two vertices in $I$ is at most 2.
		
		If $G$ contains none of the edges in $\{v_1v_2,v_2v_3,v_3v_4,v_4v_1\}$, then there exists a vertex outside $V(C)\cup I$, say $w$, that is adjacent to both $v_1$ and $v_2$. Since $d_G(v_1,v_4)=2$ and $d_G(v_2,v_3)= 2$, this implies that $w$ is adjacent to $v_3$ and $v_4$, which contradicts that $\Delta(G)\leq 3$. 
		Hence, by symmetry, we may assume that $v_1v_2\in E(G)$.  
		
		If $v_3$ and $v_4$ are not adjacent, there is a new vertex, say $w'$, that is adjacent to both $v_3$ and $v_4$. 
		Clearly, $d(u_1,w')=3$. Choosing $S$ as a maximal packing containing $\{u_1,w'\}$, we find that $v_1v_3$ is an edge in $G[N(S)]$.
		By Proposition \ref{prop3}, if $v_1\in A$ and $v_3\in Z^*$, then we obtain that $u_3\in Z$ and $u_3$ has two neighbors, namely $u_2$ and $u_4$, in $\overline{Z}$, which conflicts with $\{u_2,u_4\}\subseteq N(S)$.
		Similarly, if $v_1\in Z^*$ and $v_3\in A$, then $v_2\in Z$ and $v_2$ is adjacent to two vertices of $\overline{Z}$. This leads to $\{u_2,v_4\}\subseteq \overline{Z}$, which conflicts with $\{u_2,v_4\}\in N(S)$.
		Hence, $v_3v_4\in E(G)$, which implies that $G\cong H_3$.
	\end{proof}
	
	\begin{case}
		If the girth of $G$ is $5$, then $G\cong H_4$.
	\end{case}
	
	\begin{proof}		
		Let $|V(G)|=n$.
		If $n \ge 12$, then by Lemma \ref{Favathm}, $\rho(G) \ge \frac{n}{8}$, and by Lemma \ref{Henningconj}, $i(G) \le \frac{3n}{8}$. 
		Therefore, $i(G) \le \frac{3n}{8} \le 3\rho(G)$. Combining this condition with $i(G) = 3\rho(G)$, we find that $\rho(G) = \frac{n}{8}$ and $n$ is divisible by 8. However, by Theorem \ref{packingtheorem}, this is not possible. Therefore, we only need to consider the case where $n \le 10$ since $G$ is cubic.
		
		Let $C = u_0u_1u_2u_3u_4u_0$, and let $v_i$ be the neighbor of $u_i$ outside $C$ for each $0 \le i \le 4$. Since $g(G) = 5$, the vertices in $\{v_i: 0 \le i \le 4\}$ are pairwise distinct. If $v_iv_{i+1} \in E(G)$ (with subscripts modulo $5$) for some $0 \le i \le 4$, then there is a cycle of length 4, a contradiction. 
		Therefore, $v_iv_{i+1} \notin E(G)$ (with subscripts modulo $5$) for each $0 \le i \le 4$.
		Moreover, since $G$ is cubic, we have $N(v_i) = \{u_i, v_{i-2}, v_{i+2}\}$ (with subscripts modulo $5$) for each $0 \le i \le 4$. Hence, $G\cong H_4$.
	\end{proof}

\end{document}